\newcommand{\rrvert}{\vert}
\newcommand{\rrVert}{\Vert}
\newcommand{\llvert}{\vert}
\newcommand{\llVert}{\Vert}
\newtheorem{theorem}{Theorem}[section]
\newtheorem{conjecture}[theorem]{Conjecture}
\newtheorem{lemma}[theorem]{Lemma}
\newtheorem{proposition}[theorem]{Proposition}
\newtheorem{corollary}[theorem]{Corollary}
\newcommand{\E}{\mathbb{E}}
\newcommand{\Varhat}{\widehat{\operatorname{var}}}
\newcommand{\ND}{\mathcal{N}}
\newcommand{\eqdist}{\stackrel{d}{=}}
\newcommand{\convdist}{\stackrel{d}{\longrightarrow}}
\newcommand{\eqref}[1]{(\ref{#1})}
\newcommand{\rank}{\operatorname{rank}}
\newcommand{\Beta}{\operatorname{Beta}}
\newcommand{\trace}{\operatorname{tr}}
\newcommand{\Uniform}{\operatorname{Uniform}}
\newcommand{\degree}{\operatorname{degree}}
\newcommand{\diag}{\operatorname{diag}}
\newcommand{\Res}{\operatorname{Res}}
\newcommand{\Var}{\operatorname{var}}
\newcommand{\Cor}{\operatorname{cor}}
\renewcommand{\epsilon}{\varepsilon}
\newcommand{\fracd}[2]{({#1}/{#2})}
\begin{document}
\begin{frontmatter}

\title{Wald tests of singular hypotheses}
\runtitle{Wald tests of singular hypotheses}

\begin{aug}
\author[A]{\inits{M.}\fnms{Mathias}~\snm{Drton}\corref{}\thanksref{A}\ead[label=e1]{md5@uw.edu}} \and
\author[B]{\inits{H.}\fnms{Han}~\snm{Xiao}\thanksref{B}\ead[label=e2]{hxiao@stat.rutgers.edu}}
\address[A]{Department of Statistics, University of Washington,
Seattle, WA 98195-4322, USA.\\ \printead{e1}}
\address[B]{Department of Statistics \& Biostatistics, Rutgers
University, Piscataway, NJ 08854, USA.\\ \printead{e2}}
\end{aug}

\received{\smonth{6} \syear{2013}}
\revised{\smonth{2} \syear{2014}}

%
\begin{abstract}
Motivated by the problem of testing tetrad constraints in factor
analysis, we study the large-sample distribution of Wald statistics
at parameter points at which the gradient of the tested constraint
vanishes. When based on an asymptotically normal estimator, the
Wald statistic converges to a rational function of a normal random
vector. The rational function is determined by a homogeneous
polynomial and a covariance matrix.
For quadratic forms and bivariate monomials of arbitrary degree, we
show unexpected relationships to chi-square distributions that
explain conservative behavior of certain Wald tests. For general
monomials, we offer a conjecture according to which the reciprocal
of a certain quadratic form in the reciprocals of dependent normal
random variables is chi-square distributed.
\end{abstract}

%
\begin{keyword}
\kwd{asymptotic distribution}
\kwd{factor analysis}
\kwd{large-sample theory}
\kwd{singular parameter point}
\kwd{tetrad}
\kwd{Wald statistic}
\end{keyword}
\end{frontmatter}

\section{Introduction}
\label{sec:intro}

Let $f\in\mathbb{R}[x_1,\dots,x_k]$ be a homogeneous $k$-variate
polynomial with gradient $\nabla f$, and let $\Sigma$ be a $k\times k$
positive semidefinite matrix with positive diagonal entries. In this
paper, we study the distribution of the random variable
%
\begin{equation}
\label{eq:wald} W_{f,\Sigma} = \frac{f(X)^2}{(\nabla f(X))^T
\Sigma\nabla f(X)},
\end{equation}
where $X\sim\ND_k(0,\Sigma)$ is a normal random vector with zero mean
and covariance matrix $\Sigma$. The random variable $W_{f,\Sigma}$
arises in the description of the large-sample behavior of Wald tests
with $\Sigma$ being the asymptotic covariance matrix of an estimator
and the polynomial $f$ appearing in a Taylor approximation to the
function that defines the constraint to be tested.

In regular settings, the Wald statistic for a single constraint
converges to $\chi^2_1$, the chi-square distribution with one degree
of freedom. This familiar fact is recovered when $f(x)=a^Tx$,
$a\neq0$, is a linear form and
%
\begin{equation}
\label{eq:wald-chisquare} W_{f,\Sigma} = \biggl(\frac{a^TX}{\sqrt{a^T\Sigma a}} \biggr)^2
\end{equation}
becomes the square of a standard normal random variable; the vector
$a$ corresponds to a nonzero gradient of the tested constraint. Our
attention is devoted to cases in which $f$ has degree two or larger.
These singular cases occur when the gradient of the constraint is zero
at the true parameter.

For likelihood ratio tests, a large body of literature starting with
Chernoff \cite{chernoff1954} describes large-sample behavior in irregular
settings; examples of recent work are Aza{\"{\i}}s, Gassiat and   Mercadier \cite{azais2006},
Drton \cite{drton2009}, Kato and Kuriki \cite{kato2013} and Ritz and Skovgaard \cite{ritz2005}. In
contrast, much less work appears to exist for Wald tests. Three
examples we are aware of are Glonek \cite{glonek1993}, Gaffke, Steyer and von Davier \cite{gaffke1999}
and Gaffke, Heiligers and   Offinger \cite{gaffke2002} who treat singular hypotheses that correspond
to collapsility of contingency tables and confounding in
regression.
A further example is the preprint of {Dufour}, {Renault} and   {Zinde-Walsh} \cite{dufour2013} that was
posted while this paper was under review. Motivated by applications
in times series analysis (e.g., Galbraith and   Zinde-Walsh \cite{galbraith1997}), these authors
address, in particular, the problem of divergence of the Wald statistic
for testing multiple singular constraints.
Our own interest in singular Wald tests is
motivated by the fact that graphical models with hidden variables are
singular (Drton, Sturmfels and  Sullivant \cite{drtonoberwolfach}, Chapter~4).

In graphical modeling, or more specifically in factor analysis, the
testing of so-called `tetrad constraints' is a problem of particular
practical relevance
(Bollen, Lennox and Dahly \cite{bollen2009}, Bollen and Ting \cite{bollen2000}, Hipp and Bollen \cite{hipp2003},
Silva \textit{et~al.} \cite{silva2006}, Spirtes, Glymour and   Scheines \cite{spirtes2000}).
This problem goes back to Spearman \cite{spearman1904}; for some of the
history see Harman \cite{harman1976}. The desire to better understand the
Wald statistic for a tetrad was the initial statistical motivation for
this work. We solve the tetrad problem in Section~\ref{sec:tetrad};
the relevant polynomial is quadratic, namely, $f(x)=x_1x_2-x_3x_4$.
However, many other hypotheses are of interest in graphical modeling
and beyond (Drton, Sturmfels and  Sullivant \cite{drton2007},
Drton, Massam and Olkin \cite{drton2008}, Sullivant, Talaska and  Draisma \cite{sullivant2010}, Zwiernik and Smith \cite{zwiernik2012}).
In principle, any homogeneous polynomial $f$ could arise in the
description of a large-sample limit and, thus, general distribution
theory for the random variable $W_{f,\Sigma}$ from (\ref{eq:wald})
would be desirable.

At first sight, it may seem as if not much concrete can be said about
$W_{f,\Sigma}$ when $f$ has degree two or larger. However, the
distribution of $W_{f,\Sigma}$ can in surprising ways be independent
of the covariance matrix $\Sigma$ even if $\degree(f)\ge2$.
Glonek \cite{glonek1993} was the first to shows this in his study of the
case $f(x)=x_1x_2$ that is relevant, in particular, for hypotheses
that are the union of two sets. Moreover, the asymptotic distribution
in this case is smaller than $\chi^2_1$, making the Wald test maintain
(at times quite conservatively) a desired asymptotic level across the
entire null hypothesis. We will show that similar phenomena hold also
in degree higher than two; see Section~\ref{sec:monomial} that treats
monomials $f(x)=x_1^{\alpha_1}x_2^{\alpha_2}$. For the tetrad,
conservativeness has been remarked upon in work such
as Johnson and Bodner \cite{johnson2007}. According to our work in
Section~\ref{sec:tetrad}, this is due to the singular nature of the
hypothesis rather than effects of too small a sample size. We remark
that in singular settings standard $n$-out-of-$n$ bootstrap tests may
fail to achieve a desired asymptotic size, making it necessary to
consider $m$-out-of-$n$ and subsampling procedures; compare the
discussion and references in Drton and Williams \cite{drton2011}.

In the remainder of this paper, we first clarify the connection between
Wald tests and the random variables $W_{f,\Sigma}$
from (\ref{eq:wald}); see Section~\ref{sec:wald-tests}. We then study
the case of degree two. General aspects of quadratic forms $f$ are
the topic of Section~\ref{sec:quad}, which includes a full
classification of the bivariate case. The tetrad is studied in
Section~\ref{sec:tetrad}. As a first case of higher degrees, we treat
bivariate monomials $f$ of arbitrary degree in
Section~\ref{sec:monomial}. Our proofs make heavy use of the polar
coordinate representation of a pair of independent standard normal
random variables and, unfortunately, we have so far not been able to
prove the following conjecture, which we discuss further in
Section~\ref{sec:conjecture}, before giving final remarks in
Section~\ref{sec:conclusion}.

\begin{conjecture}
\label{conj:monomial}
Let $\Sigma$ be any positive semidefinite $k\times k$ matrix with
positive diagonal entries. If
$f(x_1,\ldots,x_k)=x_1^{\alpha_1}x_2^{\alpha_2}\cdots
x_k^{\alpha_k}$ with nonnegative integer exponents
$\alpha_1,\ldots,\alpha_k$ that are not all zero, then
\[
W_{f,\Sigma} \sim\frac{1}{(\alpha_1+\cdots+\alpha_k)^{2}} \chi^2_1.
\]
\end{conjecture}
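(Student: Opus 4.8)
The plan is to reduce Conjecture~\ref{conj:monomial} to a self-contained statement about angles on the unit sphere and then to attack that statement by matching Mellin transforms. Two reductions come first. From $\partial_if(x)=\alpha_ix_i^{-1}f(x)$ one gets $W_{f,\Sigma}=\bigl(\sum_{i,j}\alpha_i\alpha_j\Sigma_{ij}/(X_iX_j)\bigr)^{-1}$; replacing $\Sigma$ by $D\Sigma D$ for diagonal $D$ with positive entries replaces $X_i$ by $d_iX_i$ and $\Sigma_{ij}$ by $d_id_j\Sigma_{ij}$ in distribution, leaving every ratio $\Sigma_{ij}/(X_iX_j)$ unchanged, so $W_{f,\Sigma}$ depends on $\Sigma$ only through its correlation matrix; thus we may assume $\Sigma_{ii}=1$ for all $i$. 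Coordinates with $\alpha_i=0$ are inert, so we may assume $\alpha_i>0$ for all $i$ and put $d=\alpha_1+\cdots+\alpha_k$. We may also assume $\Sigma$ is nonsingular, the general positive semidefinite case following from the same argument after replacing $k$ by $\rank\Sigma$ via a factorization $\Sigma=AA^T$. Write $X=\Sigma^{1/2}Z$ with $Z\sim\ND_k(0,I_k)$, let $\sigma_1,\dots,\sigma_k$ be the rows of $\Sigma^{1/2}$ (so $\langle\sigma_i,\sigma_j\rangle=\Sigma_{ij}$ and $\lVert\sigma_i\rVert=1$), and define the vector field $v(z)=\sum_{i=1}^k\alpha_i\,\sigma_i/(\sigma_i^Tz)$, which is the gradient of $\log f$ in the $Z$-coordinates. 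Since $\lVert v(z)\rVert^2=\sum_{i,j}\alpha_i\alpha_j\Sigma_{ij}/((\sigma_i^Tz)(\sigma_j^Tz))$, one obtains
\begin{equation*}
  W_{f,\Sigma}\;=\;\frac{1}{\lVert v(Z)\rVert^2}.
\end{equation*}

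Next I would pass to polar coordinates: $Z=R\Theta$ with $R^2\sim\chi^2_k$ independent of $\Theta$ uniform on $S^{k-1}$. As $v$ is homogeneous of degree $-1$, we get $W_{f,\Sigma}=R^2\,G(\Theta)$ with $G(\theta)=1/\lVert v(\theta)\rVert^2$, and the Euler-type identity $\langle\theta,v(\theta)\rangle=\sum_i\alpha_i(\sigma_i^T\theta)/(\sigma_i^T\theta)=d$ shows that $d^2G(\theta)=\langle\theta,v(\theta)\rangle^2/\lVert v(\theta)\rVert^2\in[0,1]$ is the squared cosine of the angle between $\theta$ and $v(\theta)$. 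Recall the Beta--Gamma algebra: $\chi^2_1\eqdist B\cdot\chi^2_k$, where $B\sim\Beta(\tfrac12,\tfrac{k-1}{2})$ is independent of $\chi^2_k$ and is uniquely determined by this identity (for instance via Mellin transforms, the Mellin transform of $\chi^2_k$ having no zeros). Since $G(\Theta)$ depends only on $\Theta$, hence is independent of $R$, Conjecture~\ref{conj:monomial} is therefore \emph{equivalent} to
\begin{equation*}
  d^2\,G(\Theta)\;=\;\frac{\langle\Theta,v(\Theta)\rangle^2}{\lVert v(\Theta)\rVert^2}\;\sim\;\Beta\bigl(\tfrac12,\tfrac{k-1}{2}\bigr).
\end{equation*}
Because $\Theta_1^2\sim\Beta(\tfrac12,\tfrac{k-1}{2})$ for a fixed coordinate, this asserts that the squared cosine of the angle between a uniformly random unit vector $\Theta$ and the vector $v(\Theta)$ has the same law as the squared cosine of the angle between $\Theta$ and a \emph{fixed} direction --- a clean statement one expects to hold precisely because $v$ is a sum of the elementary terms $\alpha_i\sigma_i/(\sigma_i^Tz)$.

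To establish this identity I would compute the Mellin transform $\E\bigl[(d^2G(\Theta))^s\bigr]$ and verify that it equals $\E\bigl[\Beta(\tfrac12,\tfrac{k-1}{2})^s\bigr]=\Gamma(s+\tfrac12)\Gamma(\tfrac k2)/\bigl(\Gamma(\tfrac12)\Gamma(s+\tfrac k2)\bigr)$; equivalently, one can work directly with $\E[W_{f,\Sigma}^s]$ against the target $d^{-2s}2^s\Gamma(s+\tfrac12)/\Gamma(\tfrac12)$. The integrand is a function of the reciprocals $1/(\sigma_i^T\Theta)$ through $\lVert v(\Theta)\rVert^2=\sum_{i,j}\alpha_i\alpha_j\Sigma_{ij}/((\sigma_i^T\Theta)(\sigma_j^T\Theta))$, and for $k=2$ this is exactly the single-angle integral evaluated in Section~\ref{sec:monomial}; the hope is to reach general $k$ by integrating out the Gaussian coordinates one pair at a time, invoking the bivariate result at each step. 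The main obstacle is the cross terms $i\neq j$ in $\lVert v\rVert^2$: they couple all the linear forms $\sigma_i^Tz$, so the spherical integral does not factor, and conditioning on all but two coordinates turns the $\sigma_i^Tz$ into \emph{affine} rather than linear forms, destroying the homogeneity on which the polar reduction rests. Finding a change of variables that decouples the reciprocals $1/(\sigma_i^T\Theta)$ --- or an induction that lowers $k$ while preserving the product-of-linear-forms structure --- is, I expect, the crux. Even the isotropic case $\Sigma=I_k$ appears open and is already a worthwhile target: there $\lVert v(\Theta)\rVert^2=\sum_i\alpha_i^2/\Theta_i^2$, so the claim reduces to showing that $\sum_{i}\alpha_i^2/U_i\sim d^2/\Beta(\tfrac12,\tfrac{k-1}{2})$ when $(U_1,\dots,U_k)$ follows the $\mathrm{Dirichlet}(\tfrac12,\dots,\tfrac12)$ distribution.
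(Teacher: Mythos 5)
Your proposal is a reduction, not a proof: the statement you are addressing is precisely the paper's open Conjecture~\ref{conj:monomial}, and your own text concedes that the decisive step (evaluating or matching the Mellin transform of $d^2G(\Theta)$ for $k\ge 3$) is missing. The reductions you do carry out are correct and worth recording: the identity $\partial_i f=\alpha_i x_i^{-1}f$ gives $W_{f,\Sigma}^{-1}=\sum_{i,j}\alpha_i\alpha_j\Sigma_{ij}/(X_iX_j)$, the diagonal rescaling shows only the correlation matrix matters, the polar factorization $W_{f,\Sigma}=R^2G(\Theta)$ with $R^2\sim\chi^2_k$ independent of $\Theta$ is valid because $v$ is homogeneous of degree $-1$, and the Euler identity $\langle\theta,v(\theta)\rangle=d$ together with Mellin uniqueness correctly shows the conjecture is equivalent to $d^2G(\Theta)\sim\Beta(\tfrac12,\tfrac{k-1}{2})$. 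That spherical reformulation is a genuinely different packaging from anything in the paper and could be useful. But it is a restatement of the difficulty, not a resolution of it: you have not exhibited any mechanism for computing $\E[(d^2G(\Theta))^s]$ when the cross terms $\Sigma_{ij}/((\sigma_i^T\Theta)(\sigma_j^T\Theta))$, $i\neq j$, are present, and you correctly identify that this is where the argument stops.

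One concrete error: your closing claim that ``even the isotropic case $\Sigma=I_k$ appears open'' is false. The diagonal (in particular isotropic) case is proved in Section~\ref{sec:intro}: $1/W_{f,\Sigma}=\sum_i\alpha_i^2\sigma_{ii}/X_i^2$ is then a sum of \emph{independent} one-sided stable-$\tfrac12$ variables with parameters $\alpha_i\sqrt{\sigma_{ii}}/\sqrt{\sigma_{ii}}=\alpha_i$, and the convolution rule $p_\alpha\ast p_\beta=p_{\alpha+\beta}$ in~\eqref{eq:stable_conv} gives $1/W_{f,\Sigma}\eqdist d^2/Z^2$ at once. Combined with your own equivalence, this means your Dirichlet identity $\sum_i\alpha_i^2/U_i\eqdist d^2/\Beta(\tfrac12,\tfrac{k-1}{2})$ for $(U_1,\dots,U_k)\sim\mathrm{Dirichlet}(\tfrac12,\dots,\tfrac12)$ is already a theorem, not a target. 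The genuinely open territory begins exactly where the stable-law additivity fails, namely with the dependent cross terms; the only nondiagonal cases settled in the paper are $k=2$ (Theorem~\ref{thm:monomial}, proved by showing via residue calculus and a M\"obius symmetry that the moments of a trigonometric functional are independent of the correlation). Your moment-matching plan on the sphere is the natural multivariate analogue of that computation, but without a $k\ge 3$ substitute for the two-pole cancellation it does not close the conjecture.
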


It is not difficult to show that the conjecture holds when $\Sigma$ is
diagonal, \textit{that is}, the components of $X$ are independent Gaussian
random variables.

\begin{pf*}{Proof of Conjecture  \ref{conj:monomial} under independence}
Let $Z$ be a standard normal random variable, and $\alpha>0$. Then
$\alpha^2/Z^2$ follows the one-sided stable distribution of index
$\frac{1}{2}$ with parameter $\alpha$, which has the density
%
\begin{equation}
\label{eq:stable} p_{\alpha}(x) = \frac{\alpha}{\sqrt{2\uppi}}\frac{1}{\sqrt{x^3}}
\mathrm{e}^{-\fracd
{1}{2}\alpha^2/x},\qquad x>0.
\end{equation}
The law in (\ref{eq:stable}) is the distribution of the first
passage time of a Brownian motion to the level $\alpha$
(Feller \cite{feller1966}). Hence, it has the convolution rule
%
\begin{equation}
\label{eq:stable_conv} p_{\alpha}\ast p_{\beta} = p_{\alpha+\beta},\qquad \alpha,
\beta>0.
\end{equation}
When $f(x)=x_1^{\alpha_1}\cdots x_k^{\alpha_k}$ and
$\Sigma=(\sigma_{ij})$ is diagonal with
$\sigma_{11},\dots,\sigma_{kk}>0$, then
%
\begin{equation}
\label{eq:mono_indep} \frac{1}{W_{f,\Sigma}} = \frac{\alpha_1^2\sigma_{11}}{X_1^2}+\cdots+\frac{\alpha_k^2\sigma
_{kk}}{X_k^2}.
\end{equation}
By (\ref{eq:stable_conv}), the distribution of $1/W_{f,\Sigma}$ is
$(\alpha_1+\cdots+\alpha_k)^2/Z^2$. Therefore,
%
\begin{equation}
\label{eq:chi2_ind} W_{f,\Sigma} \sim\frac{1}{(\alpha_1+\cdots+\alpha_k)^{2}} \chi^2_1,
\end{equation}
as claimed in Conjecture \ref{conj:monomial}.
\end{pf*}

The preceding argument can be traced back to Shepp \cite{shepp1964}; see
also Cohen \cite{cohen1981}, Reid \cite{reid1987}, Quine \cite{quine1994}, DasGupta and Shepp \cite{dasgupta2004}. However, if
$X$ is a dependent random
vector, the argument no longer applies.
The case $k=2$, $\alpha_1=\alpha_2=1$ and $\Sigma$ arbitrary was
proved by Glonek \cite{glonek1993}; see Theorem~\ref{thm:glonek} below. We
prove the general statement for $k=2$ as Theorem~\ref{thm:monomial}.

\begin{remark}
\label{rem:real-exponents}
When simplified as in (\ref{eq:mono_indep}), the random variable
$W_{f,\Sigma}$ is well-defined when allowing
$\alpha_1,\dots,\alpha_k$ to take nonnegative real as opposed to
nonnegative integer values, and the above proof under independence
goes through in that case as well. We may thus
consider the random variable $W_{f,\Sigma}$ for a `monomial'
$f(x_1,\ldots,x_k)=x_1^{\alpha_1}x_2^{\alpha_2}\cdots
x_k^{\alpha_k}$ with $\alpha_1,\ldots,\alpha_k$ nonnegative real.
To be precise, we then refer to $W_{f,\Sigma}$ rewritten as
%
\begin{equation}
\label{eq:reciproque} W_{f,\Sigma} = \Biggl(\sum_{i=1}^k
\sum_{j=1}^k \sigma_{ij}
\frac{\alpha_i\alpha_j}{X_iX_j} \Biggr)^{-1}.
\end{equation}
With this convention, we believe Conjecture \ref{conj:monomial} to
be true for $\alpha_1,\ldots,\alpha_k$ nonnegative real.
\end{remark}

\section{Wald tests}
\label{sec:wald-tests}

To make the connection between Wald tests and the random variables
$W_{f,\Sigma}$ from (\ref{eq:wald}) explicit, suppose that
$\theta\in\mathbb{R}^k$ is a parameter of a statistical model and
that, based on a sample of size $n$, we wish to test the hypothesis
%
\begin{equation}
\label{eq:testing-problem} H_0\dvt\gamma(\theta)=0 \quad\mbox{versus}\quad H_1\dvt
\gamma(\theta)\neq0
\end{equation}
for a continuously differentiable function
$\gamma\dvtx \mathbb{R}^k\to\mathbb{R}$. Suppose further that there is a
$\sqrt{n}$-consistent estimator $\hat\theta$ of $\theta$ such that, as
$n\rightarrow\infty$, we have the convergence in distribution
\[
\sqrt{n}(\hat\theta-\theta) \convdist\ND_k\bigl(0,\Sigma(\theta)
\bigr),
\]
where the asymptotic covariance matrix $\Sigma(\theta)$ is a
continuous function of the parameter. The Wald statistic for
testing (\ref{eq:testing-problem}) is the ratio
%
\begin{equation}
\label{eq:wald-statistic} T_{\gamma}=\frac{\gamma(\hat\theta)^2}{\Varhat[\gamma(\hat
\theta)]} = \frac{n\gamma(\hat\theta)^2}{ (\nabla\gamma(\hat\theta))^T
\Sigma(\hat\theta) \nabla\gamma(\hat\theta)},
\end{equation}
where the denominator of the right-most term estimates the asymptotic
variance of $\gamma(\hat\theta)$, which by the delta method is given
by
\[
\bigl(\nabla\gamma(\theta)\bigr)^T \Sigma(\theta) \nabla\gamma(
\theta).
\]

Consider now a true distribution from $H_0$, that is, the true
parameter satisfies $\gamma(\theta)=0$. Without loss of generality,
we assume that $\theta=0$. If the gradient is nonzero at the true
parameter, then the limiting distribution of $T_{\gamma}$ is the
distribution of the random variable in (\ref{eq:wald-chisquare}) with
$a=\nabla\gamma(0)\neq0$ and $\Sigma=\Sigma(0)$. Hence, the limit
is $\chi^2_1$. However, if $\nabla\gamma(0)=0$ (i.e., the constraint
$\gamma$ is singular at the true parameter), then the asymptotic
distribution of $T_{\gamma}$ is no longer $\chi^2_1$ but rather given
by (\ref{eq:wald}) with the polynomial $f$ having higher degree; the
degree of $f$ is determined by how many derivatives of $\gamma$ vanish
at the true parameter.

\begin{proposition}
\label{prop:wald_normal}
Assume that $\gamma(0)=0$ and that there is a homogeneous polynomial
$f$ of degree $d\geq2$ such that, as $x\rightarrow0$,
\[
\gamma(x)=f(x)+\mathrm{o}\bigl(\llVert x\rrVert ^{d/2}\bigr)\quad
\mbox{and} \quad\nabla\gamma(x) = \nabla f(x) + \mathrm{o}\bigl(\llVert x\rrVert
^{(d-1)/2}\bigr).
\]
If $\sqrt{n}\hat\theta\convdist\mathcal{N}(0,\Sigma)$,
then $T_{\gamma}\convdist W_{f,\Sigma}$.
\end{proposition}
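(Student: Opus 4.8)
The plan is to reduce the statement to an application of the continuous mapping theorem, handling the subtlety that the degree-$d$ Taylor behavior means we must rescale $\hat\theta$ nonlinearly before passing to the limit. First I would introduce the rescaled estimator $U_n = \sqrt{n}\,\hat\theta$, so that $U_n \convdist U \sim \mathcal{N}(0,\Sigma)$ by hypothesis. The key algebraic observation is that, since $f$ is homogeneous of degree $d$, we have $f(t x) = t^d f(x)$ and, since $\nabla f$ has entries that are homogeneous of degree $d-1$, also $\nabla f(t x) = t^{d-1}\nabla f(x)$. Applying this with $t = 1/\sqrt{n}$ and $x = U_n$ gives $n\, f(\hat\theta)^2 = n\, f(U_n/\sqrt{n})^2 = n \cdot n^{-d} f(U_n)^2 = n^{1-d} f(U_n)^2$, and likewise $(\nabla f(\hat\theta))^T \Sigma(\hat\theta) \nabla f(\hat\theta) = n^{-(d-1)} (\nabla f(U_n))^T \Sigma(\hat\theta)\nabla f(U_n)$. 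The powers of $n$ cancel in the ratio, so the ``leading term'' of $T_\gamma$ is exactly $W_{f,\Sigma(\hat\theta)}$ evaluated at $U_n$.

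Next I would deal with the remainder terms. Write $\gamma(\hat\theta) = f(\hat\theta) + r(\hat\theta)$ where $r(x) = o(\|x\|^{d/2})$ as $x\to 0$. Since $\hat\theta = O_P(n^{-1/2})$, we have $\|\hat\theta\|^{d/2} = O_P(n^{-d/4})$, and more precisely $n\, r(\hat\theta)^2 = n \cdot o_P(\|\hat\theta\|^d) = n \cdot n^{-d} o_P(\|U_n\|^d) \cdot n^{d/2}\|\hat\theta\|^{d}/\|U_n\|^{d}\cdot\ldots$ — more cleanly: $n^{1-d}$ times $r(\hat\theta)^2 \cdot n^{d-1}$, and $r(\hat\theta)^2 n^{d-1} = n^{d-1} o_P(n^{-d}\|U_n\|^d) = o_P(n^{-1}\|U_n\|^d) = o_P(1)$ since $U_n$ is tight. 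So $n\gamma(\hat\theta)^2 = n^{1-d}\bigl(f(U_n)^2 + o_P(1)\bigr)$ after clearing the common factor. An entirely analogous estimate, using the gradient remainder $\nabla\gamma(x) = \nabla f(x) + o(\|x\|^{(d-1)/2})$, shows the denominator equals $n^{-(d-1)}\bigl((\nabla f(U_n))^T \Sigma(\hat\theta)\nabla f(U_n) + o_P(1)\bigr)$. Combining, and using that $\Sigma$ is continuous so $\Sigma(\hat\theta) \to \Sigma$ in probability, Slutsky's theorem gives
\begin{equation*}
  T_\gamma = \frac{f(U_n)^2 + o_P(1)}{(\nabla f(U_n))^T \Sigma(\hat\theta)\nabla f(U_n) + o_P(1)} \;\longrightarrow\; \frac{f(U)^2}{(\nabla f(U))^T \Sigma \nabla f(U)} = W_{f,\Sigma}
\end{equation*}
in distribution, by the continuous mapping theorem applied to the map $(u,S)\mapsto f(u)^2/(\nabla f(u)^T S \nabla f(u))$.

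The main obstacle is the continuity of that map, specifically the vanishing of the denominator. The function $(u,S)\mapsto \nabla f(u)^T S\,\nabla f(u)$ can be zero: it vanishes whenever $\nabla f(u) = 0$, and since $f$ is homogeneous of degree $d\ge 2$, $\nabla f$ vanishes at least at the origin, and possibly on a larger cone. One must therefore argue that $W_{f,\Sigma}$ is almost surely well-defined and that the limit random vector $U$ puts no mass on the problematic set. Since $U$ has a density (its covariance $\Sigma$ has positive diagonal, though it may be singular — care is needed here; on the support of $U$, the argument requires that $\nabla f(U)^T\Sigma\nabla f(U) > 0$ almost surely, which should follow from the structure of $f$ but deserves an explicit remark or a reduction to the nondegenerate directions of $\Sigma$). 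Given that, the discontinuity set of the map has $U$-measure zero, the continuous mapping theorem applies, and a standard subsequence argument promotes the ratio convergence to the stated convergence in distribution. I would also note that one should verify $\hat\theta = O_P(n^{-1/2})$, which is immediate from $\sqrt n\,\hat\theta \convdist \mathcal N(0,\Sigma)$ and hence tightness of $\sqrt n\,\hat\theta$.
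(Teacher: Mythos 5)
The paper states Proposition~\ref{prop:wald_normal} without proof, so there is no official argument to compare against. Your strategy is the natural one: set $U_n=\sqrt n\,\hat\theta$, use homogeneity to write $f(\hat\theta)=n^{-d/2}f(U_n)$ and $\nabla f(\hat\theta)=n^{-(d-1)/2}\nabla f(U_n)$ so the powers of $n$ cancel in the ratio, show the Taylor remainders are negligible, and conclude by Slutsky and the continuous mapping theorem (the discontinuity set $\{u:\nabla f(u)^T\Sigma\nabla f(u)=0\}$ is the zero set of a polynomial, hence null for $U$ provided that polynomial does not vanish identically on the support of $\mathcal N_k(0,\Sigma)$ --- an assumption that is implicit throughout the paper whenever $W_{f,\Sigma}$ is written down, and which you correctly flag).

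There is, however, a genuine error in your remainder bookkeeping. From $r(x)=o(\|x\|^{d/2})$ and $\hat\theta=n^{-1/2}U_n$ you get $r(\hat\theta)=o_P\big(n^{-d/4}\|U_n\|^{d/2}\big)$, hence $r(\hat\theta)^2=o_P\big(n^{-d/2}\|U_n\|^{d}\big)$ --- not $o_P\big(n^{-d}\|U_n\|^{d}\big)$ as you wrote: the identity is $\|\hat\theta\|^{d}=n^{-d/2}\|U_n\|^{d}$, so only one factor of $n^{-1/2}$ accrues per power of $\|x\|$. Consequently $n^{d}\,r(\hat\theta)^2=o_P(n^{d/2})$, which is unbounded rather than $o_P(1)$, and after clearing $n^{1-d}$ the numerator is $f(U_n)^2+o_P(n^{d/2})$; the argument collapses (the gradient remainder has the same problem). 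Your computation is valid precisely under the stronger hypotheses $\gamma(x)=f(x)+o(\|x\|^{d})$ and $\nabla\gamma(x)=\nabla f(x)+o(\|x\|^{d-1})$, which give $n^{d}r(\hat\theta)^2=o_P(1)$ and $n^{d-1}s(\hat\theta)^T\Sigma(\hat\theta)s(\hat\theta)=o_P(1)$ as required. Those are in fact the hypotheses the result needs: with the exponents as printed, a perturbation such as $r(x)=\|x\|^{(3d+1)/4}$ is $o(\|x\|^{d/2})$ with gradient $o(\|x\|^{(d-1)/2})$, yet $n\,r(\hat\theta)^2=o_P(n^{1-(3d+1)/2})$ dominates $n\,f(\hat\theta)^2=O_P(n^{1-d})$, so $T_\gamma$ does not converge to $W_{f,\Sigma}$. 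You should therefore either state explicitly that you are proving the proposition under the exponents $d$ and $d-1$ (surely the intended reading), or recognize that under the printed exponents no proof can succeed because the statement itself fails.
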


\begin{example}
\label{ex:glonek}
Glonek \cite{glonek1993} studied testing collapsibility properties of
$2\times2\times2$ contingency tables. Under an assumption of no
three-way interaction, collapsibility with respect to a chosen
margin amounts to the vanishing of at least one of two pairwise
interactions, which we here simply denote by $\theta_1$ and
$\theta_2$. In the $(\theta_1,\theta_2)$-plane, the hypothesis is
the union of the two coordinate axes, which can be described as the
solution set of $\gamma(\theta_1,\theta_2)=\theta_1\theta_2=0$ and
tested using the Wald statistic $T_\gamma$ based on maximum
likelihood estimates of $\theta_1$ and $\theta_2$. The hypothesis
is singular at the origin as reflected by the vanishing of $\nabla
\gamma$ when $\theta_1=\theta_2=0$. Away from the origin,
$T_\gamma$ has the expected asymptotic $\chi^2_1$ distribution. At
the origin, by Proposition~\ref{prop:wald_normal}, $T_\gamma$
converges to $W_{f,\Sigma}$, where $f(x)=x_1x_2$ and $\Sigma$ is the
asymptotic covariance matrix of the two maximum likelihood
estimates. The main result of Glonek \cite{glonek1993}, stated as a
theorem below, gives the distribution of $W_{f,\Sigma}$ in this
case. Glonek's surprising result clarifies that the Wald test for
this hypothesis is conservative at (and in finite samples near) the
intersection of the two sets making up the null hypothesis.
\end{example}

\begin{theorem}[(Glonek \cite{glonek1993})]\label{thm:glonek}
If $f(x)=x_1x_2$ and $\Sigma$ is any positive semidefinite $2\times
2$ matrix with positive diagonal entries, then
\[
W_{f,\Sigma} \sim\tfrac{1}{4} \chi^2_1.
\]
\end{theorem}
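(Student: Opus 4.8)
The plan is to reduce $W_{f,\Sigma}$ to a one-parameter family, pass to polar coordinates, and then verify a clean angular-uniformity statement about a uniformly random point on a circle seen from an interior point.

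First I would standardize. Since $\nabla f(x)=(x_2,x_1)^T$, one has $1/W_{f,\Sigma}=\sigma_{11}/X_1^2+2\sigma_{12}/(X_1X_2)+\sigma_{22}/X_2^2$, and writing $X_i=\sqrt{\sigma_{ii}}\,U_i$ shows $W_{f,\Sigma}\eqdist W_{f,\Sigma_\rho}$ with $\Sigma_\rho$ having unit diagonal and off-diagonal $\rho=\sigma_{12}/\sqrt{\sigma_{11}\sigma_{22}}$. The rank-one cases $\rho=\pm1$ are immediate: then $X\in\operatorname{span}(\sqrt{\sigma_{11}},\pm\sqrt{\sigma_{22}})$ a.s.\ and a one-line computation gives $W_{f,\Sigma}=G^2/4$ for a standard normal $G$. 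So assume $|\rho|<1$ and put $(U_1,U_2)^T=\Sigma_\rho^{1/2}(Z_1,Z_2)^T$ with $(Z_1,Z_2)\sim\ND_2(0,I_2)$ and the symmetric square root $\Sigma_\rho^{1/2}=\bigl(\begin{smallmatrix}p&q\\q&p\end{smallmatrix}\bigr)$, $p^2+q^2=1$, $2pq=\rho$. Using $1/W=\|\Sigma_\rho^{1/2}(1/U_1,1/U_2)^T\|^2$ together with the identities $pU_2+qU_1=\rho Z_1+Z_2$, $qU_2+pU_1=Z_1+\rho Z_2$ and $U_1U_2=\tfrac\rho2(Z_1^2+Z_2^2)+Z_1Z_2$, and then substituting $Z_1=R\cos\Theta$, $Z_2=R\sin\Theta$, everything collapses to
\begin{equation*}
  4W \;=\; \frac{R^2(\rho+\sin 2\Theta)^2}{(\rho+\sin 2\Theta)^2+\cos^2 2\Theta}
  \;=\; R^2\cos^2\Psi,\qquad \Psi:=\arg\!\bigl(\rho+ie^{-2i\Theta}\bigr),
\end{equation*}
where $R^2\sim\chi^2_2$ and $\omega:=2\Theta\bmod 2\pi\sim\Uniform[0,2\pi)$ are independent, hence $R^2$ and $\cos^2\Psi$ are independent.

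By the beta--gamma algebra, $\chi^2_2\cdot\Beta(\tfrac12,\tfrac12)=\chi^2_1$ for independent factors, so it suffices to prove $\cos^2\Psi\sim\Beta(\tfrac12,\tfrac12)$; since $\cos^2\Psi=\tfrac12(1+\cos 2\Psi)$ this is equivalent to $2\Psi\bmod2\pi$ being uniform, i.e.\ to the density $g$ of $\Psi$ satisfying $g(\psi)+g(\psi+\pi)=1/\pi$. As $\omega$ varies, $\rho+ie^{-i\omega}$ runs uniformly over the unit circle $\mathcal C$ centered at $(\rho,0)$; since $|\rho|<1$ the origin $O$ is interior, so $\omega\mapsto\Psi$ is a one-to-one smooth map of circles and $g(\psi)=\tfrac1{2\pi}|d\omega/d\psi|$. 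Differentiating gives $|d\omega/d\psi|=2r^2/(r^2+1-\rho^2)$ with $r=|OP|$ for $P$ the unique point of $\mathcal C$ on the ray from $O$ in direction $\psi$; the point in direction $\psi+\pi$ is the second intersection $P'$ of the line through $O$, and the power of the point $O$ with respect to $\mathcal C$ gives $|OP|\cdot|OP'|=1-\rho^2$. Substituting $r'^2=(1-\rho^2)^2/r^2$ into the analogue for $g(\psi+\pi)$ turns the sum into $\tfrac1\pi\cdot(r^2+1-\rho^2)/(r^2+1-\rho^2)=\tfrac1\pi$, which proves the claim and hence the theorem.

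The two reductions and the trigonometric collapse are routine; the one step carrying real content is the angular computation, and the identity that makes it painless is the power-of-a-point relation $|OP|\cdot|OP'|=1-\rho^2$ between the two intersection points of a line through $O$ with $\mathcal C$. Without that observation the density of $\Psi$ looks hopeless and one is pushed toward evaluating an ugly rational integral for the Laplace transform of $W$ directly. The place I would be most careful is the branching: one must check that $\omega\mapsto\Psi$ is genuinely a bijection of circles (which holds precisely because $O$ lies inside $\mathcal C$ when $|\rho|<1$), since a spurious factor of $2$ there would break the argument.
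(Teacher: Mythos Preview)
Your proof is correct and takes a genuinely different route from both the paper's own argument and Glonek's original.

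The paper does not prove Theorem~\ref{thm:glonek} directly but obtains it as the $\alpha_1=\alpha_2$ case of Theorem~\ref{thm:monomial}. That proof also passes to polar coordinates and isolates an angular factor, but then shows the factor's distribution is independent of $\rho$ by differentiating every moment $\E[T^m]$ in the correlation parameter and proving the derivative vanishes via a contour integral: the integrand has two poles inside the unit disc, and a M\"obius transform shows their residues cancel. Glonek's original proof, as the paper notes in the remark after Theorem~\ref{thm:monomial}, instead computes the distribution function of the angular variable directly by solving a quadratic. Your argument differs from both: you recognize the angular factor as $\cos^2\Psi$ with $\Psi$ the argument (seen from the origin) of a point traversing a unit circle centered at $(\rho,0)$, and then the power-of-a-point identity $|OP|\cdot|OP'|=1-\rho^2$ for the two intersections of a line through $O$ with that circle gives the density symmetry $g(\psi)+g(\psi+\pi)=1/\pi$ in one line, so $2\Psi$ is uniform and $\cos^2\Psi\sim\Beta(\tfrac12,\tfrac12)$.

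Your approach is more elementary and more transparent for this specific case: no complex analysis, no moment calculations, and the reason $\rho$ disappears is made geometrically visible. The price is that the symmetry you exploit is exactly the $\alpha_1=\alpha_2$ symmetry; for general $x_1^{\alpha_1}x_2^{\alpha_2}$ the analogous reduction does not land on a circle with the origin in its interior, and the paper's heavier residue machinery is what carries the generalization to Theorem~\ref{thm:monomial}.
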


Before turning to concrete problems, we make two simple observations
that we will use to bring $(f,\Sigma)$ in convenient form.

\begin{lemma}
\label{lem:invariance}
Let $f\in\mathbb{R}[x_1,\dots,x_k]$ be a homogeneous polynomial, and
let $\Sigma$ be a positive semidefinite $k\times k$ matrix with
positive diagonal entries.
\begin{enumerate}[(ii)]
\item[(i)] If $c\in\mathbb{R}\setminus\{0\}$ is a nonzero scalar,
then $W_{cf,\Sigma} = W_{f,\Sigma}$.
\item[(ii)] If $B$ is an invertible $k\times k$ matrix, then
$W_{f\circ B,B^{-1}\Sigma B^{-T}}$ has the same distribution as
$W_{f,\Sigma}$.
\end{enumerate}
\end{lemma}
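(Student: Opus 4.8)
The plan is to manipulate the ratio in~(\ref{eq:wald}) directly for part (i), and to combine the same kind of manipulation with an affine change of variables in the normal input for part (ii).

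For (i): replacing $f$ by $cf$ multiplies the numerator of $W_{f,\Sigma}$ by $c^2$, since $(cf)(X)^2=c^2 f(X)^2$; it also multiplies the denominator by $c^2$, since $\nabla(cf)(X)=c\,\nabla f(X)$ gives $(\nabla(cf)(X))^T\Sigma\,\nabla(cf)(X)=c^2\,(\nabla f(X))^T\Sigma\,\nabla f(X)$. As $c\neq0$ the two factors cancel, so in fact $W_{cf,\Sigma}=W_{f,\Sigma}$ holds as an identity of random variables (even pointwise in $x$, at every $x$ where the denominator is nonzero), not merely in law.

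For (ii): put $g=f\circ B$, i.e.\ $g(x)=f(Bx)$, so that the chain rule gives $\nabla g(x)=B^T\nabla f(Bx)$. Inserting this into the quadratic form appearing in the denominator of $W_{g,\,B^{-1}\Sigma B^{-T}}$ and using $BB^{-1}=I$ and $B^{-T}B^T=I$, the transforming matrices cancel:
\[
  \bigl(\nabla g(x)\bigr)^T B^{-1}\Sigma B^{-T}\,\nabla g(x)
  =\bigl(\nabla f(Bx)\bigr)^T\Sigma\,\nabla f(Bx).
\]
Together with $g(x)^2=f(Bx)^2$ this shows that the ratio defining $W_{g,\,B^{-1}\Sigma B^{-T}}$, viewed as a function of its argument, is the ratio defining $W_{f,\Sigma}$ evaluated at $Bx$. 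It then remains to feed in the randomness: if $X'\sim\ND_k(0,B^{-1}\Sigma B^{-T})$ then $BX'\sim\ND_k(0,\Sigma)$, whence $W_{g,\,B^{-1}\Sigma B^{-T}}$ equals that ratio at $BX'$ and therefore has the same distribution as $W_{f,\Sigma}$.

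I expect no real obstacle here; everything reduces to the chain rule and the Gaussian transformation rule. The only point worth a remark is well-definedness: one should note that the exceptional set on which the denominator vanishes is Lebesgue-null and is carried to a null set by the linear map $x\mapsto Bx$, so that both sides of the asserted distributional identity are almost surely well-defined.
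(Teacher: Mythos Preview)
Your proof is correct and follows essentially the same approach as the paper: part (i) via the obvious scaling of numerator and denominator by $c^2$, and part (ii) via the chain rule $\nabla(f\circ B)(x)=B^T\nabla f(Bx)$ combined with the linear transformation law for Gaussians. The only cosmetic difference is that the paper starts from $X\sim\ND_k(0,\Sigma)$ and sets $Y=B^{-1}X$, whereas you start from $X'\sim\ND_k(0,B^{-1}\Sigma B^{-T})$ and consider $BX'$; these are the same coupling viewed from opposite ends.
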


\begin{pf}
(i) Obvious, since $\nabla(cf) = c \nabla f$. (ii) Let
$X\sim\ND(0,\Sigma)$ and define
\[
Y= B^{-1}X\sim\ND\bigl(0,B^{-1}\Sigma B^{-T}
\bigr).
\]
Then $f(X)=(f\circ B)(Y)$ and $ \nabla(f\circ B)(Y)= B^T \nabla f
(X)$. Substituting into (\ref{eq:wald}) gives
\[
W_{f,\Sigma}=\frac{(f\circ B)(Y)^2}{(\nabla(f\circ B)(Y))^T
B^{-1}\Sigma B^{-T} \nabla(f\circ B)(Y)} = W_{f\circ
B,B^{-1}\Sigma B^{-T}}.
\]
\upqed\end{pf}

\section{Quadratic forms}
\label{sec:quad}

In this section, we consider the distribution of $W_{f,\Sigma}$ when
$f$ is a quadratic form, that is,
\[
f(x_1,x_2,\ldots,x_k) = \sum
_{1\le i\le j\le k} a_{ij}'x_ix_j
\]
for real coefficients $a_{ij}'$. Equivalently,
%
\begin{equation}
\label{eq:quad_form} f(x_1,x_2,\ldots,x_k) =
x^T Ax,
\end{equation}
where $A=(a_{ij})$ is symmetric, with $a_{ii}=a_{ii}'$ and
$a_{ij}=a_{ij}'/2$ for $ i<j$.

\subsection{Canonical form}
\label{sec:canonical-form}

Let $I$ denote the $k\times k$ identity matrix. We use the shorthand
\[
W_f := W_{f,I}
\]
when the covariance matrix $\Sigma$ is the identity.

\begin{lemma}
If $f\in\mathbb{R}[x_1,\dots,x_k]$ is homogeneous of degree $d$ and
$\Sigma$ is a positive semidefinite $k\times k$ matrix, then
$W_{f,\Sigma}$ has the same distribution as $W_{g}$ where $g$ is a
homogeneous degree $d$ polynomial in $\rank(\Sigma)$ many variables.
\end{lemma}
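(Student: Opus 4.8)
The plan is to use Lemma~\ref{lem:invariance}(ii) together with a spectral/rank decomposition of $\Sigma$ to eliminate the degenerate directions. First I would write $r=\rank(\Sigma)$ and factor the positive semidefinite matrix as $\Sigma = LL^T$, where $L$ is $k\times r$ of full column rank $r$; for instance take $L = U_1 D_1^{1/2}$ from the eigendecomposition $\Sigma = U \operatorname{diag}(D_1,0) U^T$ with $D_1$ the $r\times r$ diagonal block of positive eigenvalues. Then a random vector $X\sim\ND_k(0,\Sigma)$ can be represented as $X = LZ$ where $Z\sim\ND_r(0,I_r)$, and almost surely $X$ lies in the column space of $L$.

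Next I would substitute this representation into the definition~(\ref{eq:wald}). Since $f$ is homogeneous of degree $d$, the composition $g := f\circ L$ is a homogeneous polynomial of degree $d$ in the $r$ variables $z_1,\dots,z_r$. The numerator becomes $f(X)^2 = f(LZ)^2 = g(Z)^2$. For the denominator, the chain rule gives $\nabla g(Z) = L^T \nabla f(LZ) = L^T\nabla f(X)$, so
\begin{equation*}
  (\nabla f(X))^T \Sigma\, \nabla f(X) = (\nabla f(X))^T LL^T \nabla f(X) = (\nabla g(Z))^T \nabla g(Z) = (\nabla g(Z))^T I_r\, \nabla g(Z).
\end{equation*}
Therefore $W_{f,\Sigma} = g(Z)^2 / \big((\nabla g(Z))^T I_r\, \nabla g(Z)\big) = W_{g,I_r} = W_g$ with $g$ a homogeneous degree-$d$ polynomial in $r=\rank(\Sigma)$ variables, which is the claim. (When $\Sigma$ is invertible this recovers Lemma~\ref{lem:invariance}(ii) with a square $B$; here $L$ is merely a left factor, so one cannot invoke that lemma verbatim and must redo the short computation, but it goes through unchanged because the lemma's proof only used $f(X) = (f\circ B)(Y)$ and $\nabla(f\circ B)(Y) = B^T\nabla f(X)$, both of which hold with the non-square $L$ in place of $B$.)

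The only point needing a little care — and the main (mild) obstacle — is the denominator vanishing. The random variable $W_{f,\Sigma}$ is only well-defined where $(\nabla f(X))^T\Sigma\nabla f(X) > 0$; one should check that this holds almost surely under $X\sim\ND_k(0,\Sigma)$, equivalently that $(\nabla g(Z))^T\nabla g(Z) = \|\nabla g(Z)\|^2 > 0$ almost surely for $Z\sim\ND_r(0,I_r)$. This is where the hypothesis that $\Sigma$ has positive diagonal entries is used: it guarantees $g$ is not the zero polynomial (indeed $g\not\equiv 0$ follows since $X$ has nondegenerate marginals, so $f(X)$ is not almost surely $0$ unless $f$ itself is constant on the support, handled separately), hence $\nabla g$ is not identically zero, hence $\{\|\nabla g(z)\|^2 = 0\}$ is a proper algebraic subvariety of $\R^r$ and therefore Lebesgue-null, so it has probability zero under the nondegenerate Gaussian law of $Z$. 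With that established, the equality in distribution $W_{f,\Sigma}\sim W_g$ holds as claimed.
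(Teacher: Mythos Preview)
Your argument is correct and follows essentially the same route as the paper: the paper writes $\Sigma = BE_mB^T$ with $B$ invertible, invokes Lemma~\ref{lem:invariance}(ii), and then restricts $f\circ B$ to its first $m$ coordinates, which is exactly your $g=f\circ L$ with $L$ the first $m$ columns of $B$. One minor caveat: your claim that positive diagonal entries of $\Sigma$ force $g\not\equiv 0$ is not quite right (e.g.\ $f(x_1,x_2)=x_1-x_2$ with $\Sigma$ the all-ones $2\times2$ matrix gives $g\equiv 0$ and $W_{f,\Sigma}=0/0$), though the paper's proof does not address this degenerate case either.
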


\begin{pf}
If $\Sigma$ has full rank, then $\Sigma=BB^T$ for an invertible
matrix $B$. Use Lemma~\ref{lem:invariance}(ii) to transform
$W_{f,\Sigma}$ to $W_{g}$ where $g=f\circ B$ is homogeneous of
degree $d$. If $\Sigma$ has\vspace*{2pt} rank $m<k$, then $\Sigma=B E_m B^T$,
where $B$ is invertible and $E_m$ is zero apart from the first $m$
diagonal entries that are equal to one. Form $g$ by substituting
$x_{m+1}=\cdots=x_k=0$ into $f\circ B$.
\end{pf}

Further simplifications are possible for a treatment of the random
variables $W_f$. In the case of quadratic forms, we may restrict
attention to canonical forms
$f(x)=\lambda_1x_1^2+\cdots+\lambda_kx_k^2$, as shown in the next
lemma.

\begin{lemma}
\label{lem:quadratic-forms-spectral}
Let $f(x)=x^TAx$ be a quadratic form given by a symmetric $k\times
k$ matrix $A\neq0$. If $\Sigma$ is a positive definite $k\times k$
matrix and $\lambda_1,\dots,\lambda_k$ are the eigenvalues of
$A\Sigma$, then $W_{f,\Sigma}$ has the same distribution as
%
\begin{equation}
\label{eq:canonical-quadratic} \frac{ ( \lambda_1 Z_1^2 + \cdots+ \lambda_k Z_k^2  )^2}{
4 (\lambda_1^2 Z_1^2+\cdots+\lambda_k^2 Z_k^2 )},
\end{equation}
where $Z_1,\dots,Z_k$ are independent standard normal random
variables.
\end{lemma}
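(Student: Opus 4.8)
The plan is to reduce the quadratic form $f(x)=x^TAx$ to a diagonal one by simultaneously diagonalizing $A$ and $\Sigma$, and then to read off the stated expression directly from the definition of $W_{f,\Sigma}$. First I would use the assumption that $\Sigma$ is positive definite to write $\Sigma = BB^T$ for some invertible matrix $B$. By Lemma~\ref{lem:invariance}(ii), $W_{f,\Sigma}$ has the same distribution as $W_{g}$ with $g = f\circ B$, i.e.\ $g(y) = y^T(B^TAB)y$ and covariance the identity. The matrix $B^TAB$ is symmetric, so by the spectral theorem there is an orthogonal matrix $U$ with $U^T(B^TAB)U = \operatorname{diag}(\mu_1,\dots,\mu_k)$. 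Applying Lemma~\ref{lem:invariance}(ii) once more with the orthogonal matrix $U$ (note $U^{-1}IU^{-T}=I$, so the covariance stays the identity), $W_{f,\Sigma}$ has the same distribution as $W_h$ where $h(z) = \mu_1 z_1^2 + \dots + \mu_k z_k^2$ and $z\sim\ND_k(0,I)$, i.e.\ $z=(Z_1,\dots,Z_k)$ with the $Z_i$ independent standard normals.

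Next I would check that the diagonal coefficients $\mu_i$ are exactly the eigenvalues $\lambda_i$ of $A\Sigma$. Indeed $B^TAB = B^TA B$, and $B^TAB$ is similar to $ABB^T = A\Sigma$ via conjugation by $B^{-T}$ (equivalently, $AB B^T$ and $B^T A B$ have the same characteristic polynomial since $\det(ABB^T-tI)=\det(B^T(ABB^T-tI)B^{-T})=\det(B^TAB - tI)$, using $B^{-T}B^T = I$). Hence $\{\mu_1,\dots,\mu_k\}=\{\lambda_1,\dots,\lambda_k\}$ as multisets, and we may take $\mu_i=\lambda_i$. Then $h(z)=\sum_i\lambda_i Z_i^2$ and $\nabla h(z) = (2\lambda_1 Z_1,\dots,2\lambda_k Z_k)^T$, so
\begin{equation*}
  W_h = \frac{h(z)^2}{(\nabla h(z))^T I\,\nabla h(z)}
  = \frac{\bigl(\sum_i \lambda_i Z_i^2\bigr)^2}{\sum_i 4\lambda_i^2 Z_i^2}
  = \frac{\bigl(\lambda_1 Z_1^2+\dots+\lambda_k Z_k^2\bigr)^2}{4\bigl(\lambda_1^2 Z_1^2+\dots+\lambda_k^2 Z_k^2\bigr)},
\end{equation*}
which is precisely~\eqref{eq:canonical-quadratic}. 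This completes the argument.

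There is essentially no serious obstacle here; the only point requiring a little care is bookkeeping with transposes and inverses when invoking Lemma~\ref{lem:invariance}(ii) twice and when identifying the diagonal entries with the eigenvalues of $A\Sigma$ (as opposed to $\Sigma A$ or $B^TAB$ — all of which share the same spectrum, so it does not matter, but one should say so). One should also note that $A\ne 0$ guarantees that not all $\lambda_i$ vanish, so the denominator in~\eqref{eq:canonical-quadratic} is almost surely positive and the random variable is well defined. I would present the proof in the two-step ``diagonalize, then compute'' form above rather than trying to compress it, since the transparency of each reduction is what makes the final closed form believable.
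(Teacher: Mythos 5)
Your argument is correct and is essentially the paper's own proof: write $\Sigma=BB^T$, apply Lemma~\ref{lem:invariance}(ii) to pass to $g=f\circ B$ with identity covariance, diagonalize $B^TAB$ orthogonally, note its eigenvalues coincide with those of $A\Sigma$ by similarity, and compute $W_h$ for $h(x)=\sum_i\lambda_i x_i^2$ directly. No changes needed.
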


\begin{pf}
Write $\Sigma=BB^T$ for an invertible matrix $B$. By
Lemma~\ref{lem:invariance}(ii), $W_{f,\Sigma}$ has the same
distribution as $W_g$ with $g(x)=x^T(B^TAB)x$. Let
\[
Q^T\bigl(B^TAB\bigr)Q =\diag(\lambda_1,
\dots,\lambda_k)
\]
be the spectral decomposition of $B^TAB$, with $Q$ orthogonal. Then
$\lambda_1,\dots,\lambda_k$ are also the eigenvalues of $A\Sigma$.
Applying Lemma~\ref{lem:invariance}(ii) again, we find that
$W_{f,\Sigma}$ has the same distribution as $W_h$ with
\[
h(x)=x^T\bigl(Q^TB^TABQ\bigr)x =
\lambda_1x_1^2+\cdots+\lambda_kx_k^2.
\]
Since $\nabla h(x) = 2(\lambda_1x_1,\dots, \lambda_kx_k)$, the claim
follows.
\end{pf}

In (\ref{eq:canonical-quadratic}), the set of eigenvalues
$\{\lambda_i\dvt 1\leq i\leq k\}$ can be scaled to $\{c\lambda_i\dvt 1\leq
i\leq k\}$ for any $c\neq0$, without changing the distribution;
recall also Lemma~\ref{lem:invariance}(i). For instance, we may scale
one nonzero eigenvalue to become equal to one. When all (scaled)
$\lambda_i$ are in $\{-1,1\}$, the description of the distribution of
$W_{f,\Sigma}$ can be simplified. We write $\Beta(\alpha,\beta)$ for
the Beta distribution with parameters $\alpha,\beta>0$.

\begin{lemma}
\label{thm:beta}
Let $k_1$ and $k_2$ be two positive integers, and let $k=k_1+k_2$.
If $f(x_1,\ldots,x_{k}) =
x_1^2+\cdots+x_{k_1}^2-x_{k_1+1}^2-\cdots-x_{k_1+k_2}^2$, then $W_f$ has
the same distribution as
\[
\tfrac{1}{4}R^2(2B-1)^2,
\]
where $R^2$ and $B$ are independent, $R^2\sim\chi^2_k$, and
$B\sim\Beta(k_1/2,k_2/2)$.
\end{lemma}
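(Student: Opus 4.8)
The plan is to start from the canonical representation in Lemma~\ref{lem:quadratic-forms-spectral}, specialized to the signature $(k_1,k_2)$, so that
\[
W_f \;\stackrel{d}{=}\; \frac{\bigl(Z_1^2+\cdots+Z_{k_1}^2 - Z_{k_1+1}^2-\cdots-Z_k^2\bigr)^2}{4\bigl(Z_1^2+\cdots+Z_k^2\bigr)},
\]
with $Z_1,\dots,Z_k$ independent standard normals. Write $U = Z_1^2+\cdots+Z_{k_1}^2$ and $V = Z_{k_1+1}^2+\cdots+Z_k^2$, so $U\sim\chi^2_{k_1}$, $V\sim\chi^2_{k_2}$ are independent, $R^2 := U+V\sim\chi^2_k$, and the expression becomes $\tfrac14 (U-V)^2/(U+V)$.

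The key step is the standard Beta--Gamma algebra: if $U\sim\mathrm{Gamma}(k_1/2)$ and $V\sim\mathrm{Gamma}(k_2/2)$ are independent, then $B := U/(U+V)\sim\Beta(k_1/2,k_2/2)$ is independent of $R^2 = U+V\sim\chi^2_k$. I would invoke this classical fact (it is exactly the representation of a Dirichlet split). Then $U = R^2 B$ and $V = R^2(1-B)$, so
\[
\frac{(U-V)^2}{4(U+V)} \;=\; \frac{\bigl(R^2 B - R^2(1-B)\bigr)^2}{4R^2} \;=\; \frac{R^4 (2B-1)^2}{4 R^2} \;=\; \frac{1}{4} R^2 (2B-1)^2,
\]
which is precisely the claimed form, with the required independence of $R^2$ and $B$ inherited from the Beta--Gamma decomposition.

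There is essentially no obstacle here; the only points needing care are (a) checking that Lemma~\ref{lem:quadratic-forms-spectral} applies with $\Sigma=I$ and eigenvalues $\lambda_i\in\{+1,-1\}$ with the stated multiplicities, which is immediate since $A$ is already diagonal with those entries, so $A\Sigma = A$; and (b) citing the Beta--Gamma independence correctly (e.g.\ it follows from writing the joint density of $(U,V)$ and changing variables to $(R^2,B)$, whereupon the density factors). I would also note, for completeness, that the case $k_1=k_2$ is consistent with the symmetry $B\leftrightarrow 1-B$, and that scaling all eigenvalues by a common constant $c\neq 0$ leaves the distribution unchanged by Lemma~\ref{lem:invariance}(i), which is why the normalization to $\{+1,-1\}$ entails no loss of generality.
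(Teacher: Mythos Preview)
Your proof is correct and follows essentially the same route as the paper: both reduce $W_f$ to $\tfrac14(Y_1-Y_2)^2/(Y_1+Y_2)$ with independent $Y_1\sim\chi^2_{k_1}$, $Y_2\sim\chi^2_{k_2}$, and then use the independence of $R^2=Y_1+Y_2$ and $B=Y_1/(Y_1+Y_2)$ to identify the distribution. The only cosmetic difference is that the paper phrases this independence via polar coordinates (citing Muirhead), whereas you invoke the Beta--Gamma decomposition directly; these are the same classical fact.
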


\begin{pf}
The distribution of $W_f$ is that of
\[
\frac{1}{4}\frac{(Z_1^2+\cdots+Z_{k_1}^2-Z_{k_1+1}^2-\cdots
-Z_{k_1+k_2}^2)^2}{
Z_1^2+\cdots+Z_k^2}
\]
with $Z_1,\ldots,Z_k$ independent and standard normal. Let
\[
Y_1:=Z_1^2+\cdots+Z_{k_1}^2
\sim\chi^2_{k_1},\qquad Y_2:=Z_{k_1+1}^2+
\cdots+Z_k^2 \sim\chi^2_{k_2}.
\]
Then $R^2:=Y_1+Y_2 \sim\chi^2_k$. Representing $Z_1,\ldots,Z_k$ in
polar coordinates shows that $R^2$ and $W_f/R^2$ are independent
(Muirhead \cite{muirhead1982}, Theorem~1.5.5). Since
$B=Y_1/(Y_1+Y_2)\sim\Beta(k_1/2,k_2/2)$, and
$(Y_1-Y_2)^2/(Y_1+Y_2)^2=(2B-1)^2$, we deduce that the two random
variables $W_f/R^2$ and $\frac{1}{4}(2B-1)^2$ have the same
distribution.
\end{pf}

We note that when $k=4$ and $k_1=k_2=2$, Lemma~\ref{thm:beta} gives
the equality of distributions
%
\begin{equation}
\label{eq:R2U2} W_f \eqdist\tfrac{1}{4}R^2U^2.
\end{equation}
The equality holds because, in this special case, $U=Y_1/(Y_1+Y_2)$ is
uniformly distributed on $[0,1]$, and $(2U-1)^2$ has the same
distribution as $U^2$. The distribution from (\ref{eq:R2U2}) will
appear in Section~\ref{sec:tetrad}.

For general eigenvalues $\lambda_i$, it seems that the distribution
from (\ref{eq:canonical-quadratic}) cannot be described in as simple
terms.

\subsection{Classification of bivariate quadratic forms}
\label{sec:quad_two}

We now turn to the bivariate case ($k=2$), that is, we are considering
a quadratic form in two variables,
\[
f(x_1,x_2)=ax_1^2+2bx_1x_2+cx_2^2.
\]
In this case, we are able to give a full classification of the
possible distributions of $W_f$ in terms of linear combinations of a
pair of independent $\chi^2_1$ random variables; see
Johnson, Kotz and   Balakrishnan \cite{johnson1994}, Section~18.8, for a discussion of such
distributions. Our classification reveals that for $k=2$ the
distributions for quadratic forms are stochastically bounded below and
above by $\chi^2_1/4$ and $\chi^2_2/4$, respectively.

\begin{theorem}
\label{thm:quadratic}
Let $\Sigma$ be a positive definite matrix, and let
$f(x_1,x_2)=ax_1^2+2bx_1x_2+cx_2^2$ be a nonzero quadratic form with
matrix
\[
A:= \pmatrix{ a & b
\cr
b&c } \neq0.
\]\vspace*{-12pt}
\begin{enumerate}[(b)]
\item[(a)] If $b^2-ac \geq0$, then
$W_{f,\Sigma}\sim\chi^2_1/4$.
\item[(b)] If $b^2-ac<0$, then
\[
W_{f,\Sigma} \eqdist \frac{1}{4} \biggl(Z_1^2+
\frac{4\det(A\Sigma)}{\trace(A\Sigma)^2} Z_2^2 \biggr),
\]
where $Z_1$ and $Z_2$ are independent standard normal random
variables.
\end{enumerate}
\end{theorem}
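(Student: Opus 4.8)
The plan is to reduce everything to the canonical description in Lemma~\ref{lem:quadratic-forms-spectral}. Write $\Sigma=BB^T$ and pass to $g(x)=x^T(B^TAB)x$; the eigenvalues $\lambda_1,\lambda_2$ of $B^TAB$ coincide with those of $A\Sigma$. The sign of $b^2-ac=-\det(A)$ is preserved under $A\mapsto B^TAB$ (the determinant gets multiplied by $(\det B)^2>0$), so $\det(A\Sigma)=\det(B^TAB)=\lambda_1\lambda_2$ has the same sign as $-(b^2-ac)$. Thus case (a) is exactly the case $\lambda_1\lambda_2\le 0$ (eigenvalues of opposite sign, or one zero), and case (b) is the case $\lambda_1\lambda_2>0$ (both eigenvalues strictly of the same sign). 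By Lemma~\ref{lem:quadratic-forms-spectral}, $W_{f,\Sigma}$ has the distribution of
\[
\frac{(\lambda_1 Z_1^2+\lambda_2 Z_2^2)^2}{4(\lambda_1^2 Z_1^2+\lambda_2^2 Z_2^2)},
\]
and by Lemma~\ref{lem:invariance}(i) we may rescale $(\lambda_1,\lambda_2)$ by any nonzero constant.

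For part (a), suppose first $\lambda_1\lambda_2<0$; rescale so that $\lambda_1=1$ and write $\lambda_2=-\mu$ with $\mu>0$, then rescale the underlying normals — or rather substitute $Z_2\mapsto Z_2/\sqrt{\mu}$, which is again standard normal — to reduce to $\{\lambda_1,\lambda_2\}=\{1,-1\}$. This is precisely the $k=2$, $k_1=k_2=1$ instance of Lemma~\ref{thm:beta}: $W_f\eqdist\tfrac14 R^2(2B-1)^2$ with $R^2\sim\chi^2_2$ independent of $B\sim\Beta(1/2,1/2)$. A direct computation then identifies this with $\chi^2_1/4$: writing $R^2(2B-1)^2=(Z_1^2+Z_2^2)\big((Z_1^2-Z_2^2)/(Z_1^2+Z_2^2)\big)^2=(Z_1^2-Z_2^2)^2/(Z_1^2+Z_2^2)$, and in polar coordinates $Z_1=R\cos\Psi$, $Z_2=R\sin\Psi$ this equals $R^2\cos^2(2\Psi)$; since $R^2\sim\chi^2_2$ is independent of $\cos^2(2\Psi)$, and $R\cos(2\Psi)\eqdist R\cos\Psi$ is standard normal, we get $\chi^2_1$. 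Dividing by $4$ gives $W_{f,\Sigma}\sim\chi^2_1/4$. If instead $b^2-ac=0$, then exactly one eigenvalue is $0$; rescaling the nonzero one to $1$ gives $(\lambda_1 Z_1^2+\lambda_2 Z_2^2)^2/(4(\lambda_1^2 Z_1^2+\lambda_2^2 Z_2^2))=Z_1^4/(4Z_1^2)=Z_1^2/4\sim\chi^2_1/4$.

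For part (b), both eigenvalues are nonzero with the same sign; rescaling by $1/\lambda_1$ we may assume $\lambda_1=1$ and $\lambda_2=:\tau>0$. Set $c:=\det(A\Sigma)/\trace(A\Sigma)^2=\lambda_1\lambda_2/(\lambda_1+\lambda_2)^2=\tau/(1+\tau)^2$. Using polar coordinates $Z_1=R\cos\Psi$, $Z_2=R\sin\Psi$, the canonical expression becomes $R^2(\cos^2\Psi+\tau\sin^2\Psi)^2/\big(4(\cos^2\Psi+\tau^2\sin^2\Psi)\big)$, where $R^2\sim\chi^2_2$ is independent of $\Psi\sim\Uniform[0,2\pi]$. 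The claim is that this has the law of $\tfrac14(Z_1^2+cZ_2^2)=\tfrac14 R^2(\cos^2\Psi+c\sin^2\Psi)$ — again with $R^2\sim\chi^2_2$ independent of the angular part. Since in both cases $R^2\sim\chi^2_2$ and is independent of the angular factor, it suffices to show the two angular random variables
\[
\frac{(\cos^2\Psi+\tau\sin^2\Psi)^2}{\cos^2\Psi+\tau^2\sin^2\Psi}
\qquad\text{and}\qquad
\cos^2\Psi+c\sin^2\Psi
\]
have the same distribution for $\Psi\sim\Uniform[0,2\pi]$. Both are functions of $\cos(2\Psi)$, which is distributed as $\cos\Theta$ with $\Theta\sim\Uniform[0,2\pi]$, i.e. its law is the arcsine law on $[-1,1]$; equivalently write $s=\sin^2\Psi=(1-\cos2\Psi)/2$, whose density on $(0,1)$ is $\pi^{-1}s^{-1/2}(1-s)^{-1/2}$. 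So the task reduces to showing that the pushforwards of the $\Beta(1/2,1/2)$ law under $s\mapsto (1-s+\tau s)^2/(1-s+\tau^2 s)$ and under $s\mapsto 1-s+cs$ agree. The cleanest route is to match a complete family of moments, or to exhibit an explicit measure-preserving involution of $(0,1)$ (with the arcsine weight) carrying one map to the other; I expect a Möbius-type substitution $s\mapsto \tau s/(1-s+\tau s)$ or similar to do the job, since $1-s+\tau s=(1+(\tau-1)s)$ sits in the denominator. \emph{This is the main obstacle:} verifying that the nonlinear change of variable $s\mapsto(1-s+\tau s)^2/(1-s+\tau^2 s)$ transports the arcsine measure to the arcsine measure pushed forward by an affine map with the stated slope $c=\tau/(1+\tau)^2$. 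Once that measure-theoretic identity is established — whether by a substitution or by computing $\E[(\,\cdot\,)^m]$ on both sides and checking they coincide as rational functions of $\tau$ — parts (a) and (b) are complete, and the stochastic bounds $\chi^2_1/4\preceq W_{f,\Sigma}\preceq\chi^2_2/4$ follow since $0\le c\le 1/4<1$, with $c=0$ recovering case (a) and $c$ at its maximum $1/4$ (i.e. $\tau=1$) giving the $\tfrac14\chi^2_2$ extreme.
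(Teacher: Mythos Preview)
Your setup via Lemma~\ref{lem:quadratic-forms-spectral} and the translation to eigenvalues of $A\Sigma$ is exactly right, and your treatment of the boundary case (one eigenvalue zero) and of the symmetric case $\{1,-1\}$ in part~(a) is correct. But there are two genuine gaps.

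\medskip
\textbf{Part (a).} The step ``substitute $Z_2\mapsto Z_2/\sqrt{\mu}$, which is again standard normal, to reduce to $\{1,-1\}$'' is invalid. With $\lambda_1=1$, $\lambda_2=-\mu$ the expression is
\[
\frac{(Z_1^2-\mu Z_2^2)^2}{4(Z_1^2+\mu^2 Z_2^2)},
\]
and no linear rescaling of $Z_2$ can turn $\mu Z_2^2$ into $Z_2^2$ in the numerator \emph{and} $\mu^2 Z_2^2$ into $Z_2^2$ in the denominator simultaneously (equivalently, Lemma~\ref{lem:invariance}(ii) with $B=\mathrm{diag}(1,1/\sqrt{\mu})$ changes the covariance to $\mathrm{diag}(1,\mu)$, not $I$). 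The paper avoids canonical form here altogether: when $b^2-ac\ge 0$ the quadratic factors over $\mathbb{R}$ as a product of two linear forms, whose values at $X\sim\ND(0,\Sigma)$ are jointly bivariate normal; this puts you in the situation $g(y)=y_1y_2$ with some covariance $\Sigma'$, and Theorem~\ref{thm:glonek} (or Theorem~\ref{thm:monomial}) gives $\chi^2_1/4$ directly. That one line replaces your attempted reduction.

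\medskip
\textbf{Part (b).} You correctly isolate the obstacle---matching the law of
\[
\frac{(\cos^2\Psi+\tau\sin^2\Psi)^2}{\cos^2\Psi+\tau^2\sin^2\Psi}
\qquad\text{with}\qquad
\cos^2\Psi+\frac{\tau}{(1+\tau)^2}\sin^2\Psi
\]
---but do not resolve it. The paper's device is an algebraic rearrangement followed by the stable-law trick from the introduction. One checks the identity
\[
\frac{(\cos^2\Psi+\tau\sin^2\Psi)^2}{\cos^2\Psi+\tau^2\sin^2\Psi}
\;=\; 1-\frac{(1-\tau)^2}{(1+\tau)^2}\,S_\tau(\Psi),
\qquad
S_\tau(\Psi):=\frac{(1+\tau)^2\cos^2\Psi\sin^2\Psi}{\cos^2\Psi+\tau^2\sin^2\Psi},
\]
and then proves (this is Lemma~\ref{lem:trig-positive-c}) that $S_\tau(\Psi)\eqdist\cos^2\Psi$ for every $\tau\ge 0$: with $R^2\sim\chi^2_2$ independent of $\Psi$,
\[
\frac{1}{R^2 S_\tau(\Psi)} \;=\; \frac{1}{(1+\tau)^2}\,\frac{1}{[R\sin\Psi]^2}\;+\;\frac{\tau^2}{(1+\tau)^2}\,\frac{1}{[R\cos\Psi]^2}
\]
is a sum of two independent one-sided stable-$\tfrac12$ variables with parameters $1/(1+\tau)$ and $\tau/(1+\tau)$, hence stable-$\tfrac12$ with parameter $1$ by~\eqref{eq:stable_conv}; so $R^2 S_\tau(\Psi)\sim\chi^2_1$, which forces $S_\tau(\Psi)\eqdist\cos^2\Psi$. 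Substituting back gives
\[
W_f\;\eqdist\;\frac{R^2}{4}\left(1-\frac{(1-\tau)^2}{(1+\tau)^2}\cos^2\Psi\right)
\;=\;\frac{R^2}{4}\left(\sin^2\Psi+\frac{4\tau}{(1+\tau)^2}\cos^2\Psi\right),
\]
which is the claimed linear combination of two independent $\chi^2_1$'s. Your proposed routes (moment matching, or a M\"obius-type change of variable on the arcsine law) are plausible but not carried out; the stable-law argument is the missing key idea.
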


Before giving a proof of the theorem, we would like to point out that
the key insight, Lemma~\ref{lem:trig-positive-c} below, can also be
obtained from a theorem of Marianne Mora that is based on properties
of the Cauchy distribution (Seshadri \cite{seshadri1993}, Theorem~2.3).

\begin{pf*}{Proof of Theorem \ref{thm:quadratic}}
(a) When the discriminant $b^2-ac\geq0$ then $f$ factors into a
product of two linear forms. The joint distribution of the two
linear forms is bivariate normal. Write $\Sigma'$ for the
covariance matrix of the linear forms then the distribution of $W_f$
is equal to the distribution of $W_{g,\Sigma'}$ with
$g(x_1,x_2)=x_1x_2$. Hence, the distribution is $\chi^2_1/4$ by
Theorem~\ref{thm:glonek}. 

(b) In this case, the discriminant is negative and $f$ does not
factor. By Lemma~\ref{lem:quadratic-forms-spectral}, we can assume
$\Sigma=I$ and consider the distribution of $W_f$ for
$f(x_1,x_2)=\lambda_1x_1^2+\lambda_2x_2^2$, where $\lambda_1$ and
$\lambda_2$ are the eigenvalues of $A\Sigma$. Since
$\det(A\Sigma)=\lambda_1\lambda_2$ and
$\trace(A\Sigma)=\lambda_1+\lambda_2$, to prove the claim, we must
show that in this case
%
\begin{equation}
\label{eq:bivariate-neg-disc-claim} W_f \eqdist\frac{1}{4} \biggl(Z_1^2+
\frac{4\lambda_1\lambda
_2}{(\lambda
_1+\lambda_2)^2} Z_2^2 \biggr) = \frac{1}{4}
\biggl(Z_1^2+\frac{4c}{(1+c)^2} Z_2^2
\biggr),
\end{equation}
where $c=\lambda_2/\lambda_1>0$.


To show (\ref{eq:bivariate-neg-disc-claim}) we use the polar
coordinates again. So represent the two considered independent
standard normal random variables as $X_1=R\cos(\Psi)$ and
$X_2=R\sin(\Psi)$, where $R^2\sim\chi^2_2$ and
$\Psi\sim\Uniform[0,2\uppi]$ are independent. Then
\begin{eqnarray*}
W_f & =& \frac{R^2}{4} \cdot \frac{ [\cos(\Psi)^2+c\sin(\Psi)^2 ]^2}{
\cos(\Psi)^2+c^2\sin(\Psi)^2}
\\
& =& \frac{R^2}{4} \cdot \biggl(1-\frac{(1-c)^2}{(1+c)^2} \frac{(1+c)^2\cos(\Psi)^2\sin(\Psi)^2}{
\cos(\Psi)^2+c^2\sin(\Psi)^2}
\biggr).
\end{eqnarray*}
Using Lemma~\ref{lem:trig-positive-c}, we have
%
\begin{eqnarray}
W_f & \eqdist& \frac{R^2}{4} \biggl(1-\frac{(1-c)^2}{(1+c)^2}\cos(
\Psi)^2 \biggr)
\nonumber
\\
[-8pt]
\\
[-8pt] & =& \frac{R^2}{4} \biggl(\frac{4c}{(1+c)^2}\cos(
\Psi)^2+\sin(\Psi )^2 \biggr).
\nonumber
\end{eqnarray}
This is the claim from (\ref{eq:bivariate-neg-disc-claim}) because
$R\cos(\Psi)$ and $R\sin(\Psi)$ are independent and standard normal.
\end{pf*}

\begin{lemma}
\label{lem:trig-positive-c}
If $c\ge0$ and $\Psi$ has a uniform distribution over $[0,2\uppi]$,
then
\[
S_c(\Psi) := \frac{(1+c)^2\cos(\Psi)^2\sin(\Psi)^2}{\cos(\Psi)^2+c^2\sin
(\Psi)^2} \eqdist\cos(\Psi)^2.
\]
\end{lemma}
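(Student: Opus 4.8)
The plan is to compute the distribution function of $S_c(\Psi)$ directly and compare it with that of $\cos^2(\Psi)$. Observe that $S_c(\Psi)$ depends on $\Psi$ only through $\cos^2(\Psi)$, since $\sin^2(\Psi)=1-\cos^2(\Psi)$; thus $S_c(\Psi)=h\bigl(\cos^2(\Psi)\bigr)$ with
\[
h(v)=\frac{(1+c)^2\,v(1-v)}{c^2+(1-c^2)v},\qquad v\in[0,1].
\]
When $\Psi\sim\Uniform[0,2\pi]$, the variable $V:=\cos^2(\Psi)$ has the arcsine law on $[0,1]$, with distribution function $G(v)=\tfrac{2}{\pi}\arcsin\sqrt{v}$, so the claim $S_c(\Psi)\eqdist\cos^2(\Psi)$ is equivalent to the statement that $h(V)$ again has distribution function $G$. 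The case $c=0$ is immediate, as then $h(v)=1-v$ and $h(V)=\sin^2(\Psi)\eqdist\cos^2(\Psi)$; so assume $c>0$.

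First I would record the shape of $h$. Its denominator is positive on $[0,1]$, one has $h(0)=h(1)=0$, and a short computation shows that the unique critical point of $h$ in $[0,1]$ is $v^{\ast}=c/(1+c)$, at which $h(v^{\ast})=1$. Hence $h$ increases on $[0,v^{\ast}]$, decreases on $[v^{\ast},1]$, and maps $[0,1]$ onto $[0,1]$. Consequently, for $0<s<1$ the sublevel set $\{v\in[0,1]:h(v)\le s\}$ is $[0,v_1]\cup[v_2,1]$, where $v_1<v^{\ast}<v_2$ are the two roots of the quadratic
\[
(1+c)^2v^2-\bigl[(1+c)^2-s(1-c^2)\bigr]v+sc^2=0
\]
obtained by clearing denominators in $h(v)=s$. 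Therefore $\PP\bigl(h(V)\le s\bigr)=G(v_1)+1-G(v_2)$.

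To finish, I would use Vieta's formulas for this quadratic: $v_1v_2=sc^2/(1+c)^2$ and, after a one-line simplification, $(1-v_1)(1-v_2)=s/(1+c)^2$. Since $\arcsin\sqrt{v_2}=\tfrac{\pi}{2}-\arcsin\sqrt{1-v_2}$, we have $1-G(v_2)=\tfrac{2}{\pi}\arcsin\sqrt{1-v_2}$ and hence
\[
\PP\bigl(h(V)\le s\bigr)=\tfrac{2}{\pi}\Bigl(\arcsin\sqrt{v_1}+\arcsin\sqrt{1-v_2}\Bigr).
\]
As $v_1+(1-v_2)\le 1$, the addition formula $\arcsin\alpha+\arcsin\beta=\arcsin\bigl(\alpha\sqrt{1-\beta^2}+\beta\sqrt{1-\alpha^2}\bigr)$ applies with $\alpha=\sqrt{v_1}$ and $\beta=\sqrt{1-v_2}$, and since
\[
\sqrt{v_1v_2}+\sqrt{(1-v_1)(1-v_2)}=\frac{c\sqrt{s}+\sqrt{s}}{1+c}=\sqrt{s},
\]
it gives $\PP\bigl(h(V)\le s\bigr)=\tfrac{2}{\pi}\arcsin\sqrt{s}=G(s)$. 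The boundary values $s=0$ and $s=1$ are trivial, so $h(V)\eqdist V$, proving the lemma.

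The calculation is short; the genuinely load-bearing facts are that $\cos^2(\Psi)$ is arcsine distributed and that the two Vieta products $v_1v_2$ and $(1-v_1)(1-v_2)$ are both exact multiples of $s$, so that the arcsine addition formula collapses the answer to $\arcsin\sqrt{s}$. The only delicate point is the branch in that addition formula, which is why $v_1\le v_2$ is used. An essentially equivalent alternative is to write $\cos^2(\Psi)=(1+\tan^2\Psi)^{-1}$ with $\tan\Psi$ standard Cauchy, check the algebraic identity $S_c(\Psi)=\bigl(1+(a\tan\Psi-b\cot\Psi)^2\bigr)^{-1}$ with $a=c/(1+c)$ and $b=1/(1+c)$, and then prove the classical fact that $aX-b/X$ is standard Cauchy whenever $X$ is and $a,b\ge 0$, $a+b=1$ (via the same arctangent-addition computation); the lemma then follows because $(a\tan\Psi-b\cot\Psi)^2\eqdist\tan^2\Psi$.
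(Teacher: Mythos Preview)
Your proof is correct and takes a genuinely different route from the paper. The paper's argument is probabilistic: it multiplies $S_c(\Psi)$ by an independent $R^2\sim\chi^2_2$, observes that
\[
\frac{1}{R^2 S_c(\Psi)}=\frac{1}{(1+c)^2}\cdot\frac{1}{[R\sin\Psi]^2}+\frac{c^2}{(1+c)^2}\cdot\frac{1}{[R\cos\Psi]^2}
\]
is a sum of two independent one-sided stable variables of index $\tfrac12$, invokes the convolution rule $p_\alpha\ast p_\beta=p_{\alpha+\beta}$ to get $R^2S_c(\Psi)\sim\chi^2_1\eqdist R^2\cos^2\Psi$, and then strips off $R^2$ by a moment argument. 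Your approach instead computes the distribution function of $h(V)$ directly from the arcsine law, with the Vieta identities $v_1v_2=sc^2/(1+c)^2$ and $(1-v_1)(1-v_2)=s/(1+c)^2$ doing all the work via the arcsine addition formula. Your proof is entirely elementary and self-contained, needing no stable-law machinery and no auxiliary $R^2$; the paper's proof is shorter to state once that machinery is already in hand (as it is, from the introduction) and ties the lemma back into the paper's recurring theme. Your closing alternative via $a\tan\Psi-b\cot\Psi$ and the Cauchy distribution is essentially the paper's stable-law argument in disguise: the arctangent-addition proof that $aX-b/X$ is standard Cauchy when $X$ is and $a+b=1$ is the Cauchy-side analogue of the stable convolution.
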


\begin{pf}
Let $R^2\sim\chi^2_2$ be independent of $\Psi$. Then $R\sin(\Psi)$
and $R\cos(\Psi)$ are independent and standard normal. Therefore,
\[
\frac{1}{R^2S_c(\Psi)} = \frac{1}{(1+c)^2} \cdot\frac{1}{[R\sin(\Psi)]^2} +
\frac{c^2}{(1+c)^2} \frac{1}{[R\cos(\Psi)]^2}
\]
is the sum of two independent random variables that follow the
one-sided stable distribution of index $\frac{1}{2}$. Since $c>0$,
the first summand has the stable distribution with parameter
$1/(1+c)$ and the second summand has parameter $c/(1+c)$. Hence, by
\eqref{eq:stable_conv}, their sum follows a stable law with
parameter 1. Expressing this in terms of the reciprocals,
\[
R^2S_c(\Psi) \eqdist R^2\cos(
\Psi)^2 \sim \chi^2_1.
\]
It follows that $S_c(\Psi)$ has the same distribution as
$\cos(\Psi)^2$. For instance, we may argue that $S_c(\Psi)$ and
$\cos(\Psi)^2$ have identical moments, which implies equality of the
distributions as both are compactly supported.
\end{pf}

The claim of Lemma~\ref{lem:trig-positive-c} is false for $c<0$.
Indeed, the distribution of $S_c(\Psi)$ varies with $c$ when $c<0$.

\subsection{Stochastic bounds}
\label{sec:stochastic-bounds}

To understand possible conservativeness of Wald tests, it is
interesting to look for stochastic bounds on $W_{f,\Sigma}$ that hold
for all $f$ and $\Sigma$. We denote the stochastic ordering of two
random variables as $U\le_{\mathrm{st}} V$ when $P(U> t)\le P(V> t)$ for all
$t\in\mathbb{R}$.

\begin{proposition}
\label{prop:csi}
If $f\in\mathbb{R}[x_1,\dots,x_k]$ is a quadratic form and $\Sigma$
any nonzero positive semidefinite $k\times k$ matrix, then
$W_{f,\Sigma}\le_{\mathrm{st}} \frac{1}{4}\chi^2_k$. Equality is achieved
when $f(x)=x_1^2+\cdots+x_k^2$ and $\Sigma$ is the identity matrix.
\end{proposition}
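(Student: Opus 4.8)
The plan is to reduce to the canonical form and then bound the resulting ratio pointwise. By the first lemma of Section~\ref{sec:canonical-form}, we may replace $\Sigma$ by the identity at the cost of reducing the number of variables to $m=\rank(\Sigma)\le k$; since the target bound $\tfrac14\chi^2_k$ only gets larger (stochastically) as the number of degrees of freedom increases, it suffices to prove $W_f\le_{st}\tfrac14\chi^2_m\le_{st}\tfrac14\chi^2_k$. So assume $\Sigma=I$ and $f(x)=x^TAx$ with $A$ symmetric and nonzero. By Lemma~\ref{lem:quadratic-forms-spectral} (applied with $\Sigma=I$, i.e. just the spectral theorem for $A$), $W_f$ has the distribution of
\[
\frac{(\lambda_1Z_1^2+\cdots+\lambda_kZ_k^2)^2}{4(\lambda_1^2Z_1^2+\cdots+\lambda_k^2Z_k^2)},
\]
with $Z_1,\dots,Z_k$ i.i.d.\ standard normal and $\lambda_i$ the eigenvalues of $A$, not all zero.

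The key step is a pointwise Cauchy--Schwarz bound. Writing $w_i=Z_i^2\ge0$, I claim
\[
\frac{\left(\sum_i\lambda_iw_i\right)^2}{\sum_i\lambda_i^2w_i}\;\le\;\sum_i w_i
\]
whenever the denominator is nonzero. Indeed, by Cauchy--Schwarz applied to the vectors $(\lambda_i\sqrt{w_i})_i$ and $(\sqrt{w_i})_i$,
\[
\left(\sum_i\lambda_iw_i\right)^2=\left(\sum_i(\lambda_i\sqrt{w_i})(\sqrt{w_i})\right)^2\le\left(\sum_i\lambda_i^2w_i\right)\left(\sum_i w_i\right),
\]
and dividing by $\sum_i\lambda_i^2w_i>0$ gives the claim. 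Hence, almost surely,
\[
W_f=\frac{(\sum_i\lambda_iZ_i^2)^2}{4\sum_i\lambda_i^2Z_i^2}\;\le\;\frac14\sum_{i=1}^k Z_i^2\;\sim\;\tfrac14\chi^2_k.
\]
An almost-sure inequality between two random variables (on a common probability space) immediately implies the stochastic ordering $W_f\le_{st}\tfrac14\chi^2_k$. One should note that the denominator $\sum_i\lambda_i^2Z_i^2$ vanishes only on a null set (since not all $\lambda_i$ are zero), so $W_f$ is well-defined almost surely and the inequality holds where it matters.

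For the equality claim, take $f(x)=x_1^2+\cdots+x_k^2$ and $\Sigma=I$, so all $\lambda_i=1$; then the ratio collapses to $\tfrac14\sum_iZ_i^2\sim\tfrac14\chi^2_k$ exactly, giving equality in distribution. I do not anticipate a serious obstacle here: the only points needing a line of care are the reduction from general $\Sigma$ to the identity (invoking the rank-reduction lemma and monotonicity of $\chi^2_k$ in $k$ under $\le_{st}$) and the harmless null-set issue with the denominator; the core is the one-line Cauchy--Schwarz estimate.
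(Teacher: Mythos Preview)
Your proposal is correct and follows essentially the same approach as the paper: reduce to the canonical form of Lemma~\ref{lem:quadratic-forms-spectral} and apply Cauchy--Schwarz pointwise to bound the ratio by $\tfrac14\sum_iZ_i^2$. Your explicit handling of the rank-deficient case via the rank-reduction lemma and the monotonicity $\chi^2_m\le_{st}\chi^2_k$ is a detail the paper leaves implicit, but otherwise the arguments coincide.
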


\begin{pf}
The second claim is obvious. For the first claim, without loss of
generality, we can restrict our attention to the distributions
from (\ref{eq:canonical-quadratic}). The Cauchy--Schwarz inequality
gives
\begin{eqnarray*}
\frac{ ( \lambda_1 Z_1^2 + \cdots+ \lambda_k Z_k^2  )^2}{
4 (\lambda_1^2 Z_1^2+\cdots+\lambda_k^2 Z_k^2 )} &\le& \frac{ ( Z_1^2 + \cdots+ Z_k^2  ) (\lambda_1^2
Z_1^2+\cdots+\lambda_k^2 Z_k^2 )}{
4 (\lambda_1^2 Z_1^2+\cdots+\lambda_k^2 Z_k^2 )}
\\
&= &\frac{1}{ 4} \bigl( Z_1^2 + \cdots+
Z_k^2 \bigr),
\end{eqnarray*}
which is the desired chi-square bound.
\end{pf}

The considered Wald test rejects the hypothesis that
$\gamma(\theta)=0$ when the statistic $T_\gamma$
from (\ref{eq:wald-statistic}) exceeds $c_\alpha$, where $c_\alpha$ is
the $(1-\alpha)$ quantile of the $\chi^2_1$ distribution. Let
$k_\alpha$ be the largest degrees of freedom $k$ such that a
$\frac{1}{4}\chi^2_k$ random variable exceeds $c_\alpha$ with
probability at most $\alpha$. According to
Proposition~\ref{prop:csi}, if the true parameter is a singularity at
which $\gamma$ can be approximated by a quadratic form in at most
$k_\alpha$ variables, then the Wald test is guaranteed to be
asymptotically conservative. Some values are
\[
k_{0.05} = 7,\qquad k_{0.025} = 11,\qquad k_{0.01} = 16,\qquad
k_{0.005}=20,\qquad k_{0.001} = 29.
\]

Turning to a lower bound, we can offer the following simple
observation.

\begin{proposition}
\label{lem:stochastic-lower-bound}
Suppose the quadratic form $f$ is given by a symmetric $k\times k$
matrix $A\neq0$, and suppose that $\Sigma$ is a positive definite
$k\times k$ matrix such that all eigenvalues of $A\Sigma$ are
nonnegative. Then $W_{f,\Sigma}\ge_{\mathrm{st}} \frac{1}{4}\chi^2_1$.
\end{proposition}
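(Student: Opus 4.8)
The plan is to reduce to the canonical form supplied by Lemma~\ref{lem:quadratic-forms-spectral} and then exhibit, on a single probability space, an almost-sure pointwise lower bound for $W_{f,\Sigma}$ of the shape $\tfrac14 Z^2$ with $Z$ a standard normal; stochastic domination then follows directly from the definition.

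First I would invoke Lemma~\ref{lem:quadratic-forms-spectral}: since $\Sigma$ is positive definite, $W_{f,\Sigma}$ has the same distribution as
\[
\frac{\bigl(\lambda_1 Z_1^2+\cdots+\lambda_k Z_k^2\bigr)^2}{4\bigl(\lambda_1^2 Z_1^2+\cdots+\lambda_k^2 Z_k^2\bigr)},
\]
where $Z_1,\dots,Z_k$ are independent $\ND(0,1)$ variables and $\lambda_1,\dots,\lambda_k$ are the eigenvalues of $A\Sigma$. By hypothesis $\lambda_i\ge 0$ for all $i$, and since $A\neq 0$ and $\Sigma$ is invertible, not all the $\lambda_i$ vanish (they are the eigenvalues of the nonzero symmetric matrix $B^TAB$ from the proof of that lemma). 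Put $\ell:=\max_i\lambda_i>0$ and fix an index $j$ with $\lambda_j=\ell$.

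The key step is the trivial inequality $\lambda_i^2\le\ell\lambda_i$, valid for every $i$ precisely because $0\le\lambda_i\le\ell$; this is the only place the sign hypothesis on the eigenvalues is used. Multiplying by $Z_i^2\ge 0$ and summing gives $\sum_i\lambda_i^2 Z_i^2\le\ell\sum_i\lambda_i Z_i^2$. Since $\lambda_j>0$, both $\sum_i\lambda_i Z_i^2$ and $\sum_i\lambda_i^2 Z_i^2$ are strictly positive almost surely, so almost surely
\[
\frac{\bigl(\sum_i\lambda_i Z_i^2\bigr)^2}{4\sum_i\lambda_i^2 Z_i^2}\;\ge\;\frac{\bigl(\sum_i\lambda_i Z_i^2\bigr)^2}{4\ell\sum_i\lambda_i Z_i^2}\;=\;\frac{\sum_i\lambda_i Z_i^2}{4\ell}\;\ge\;\frac{\lambda_j Z_j^2}{4\ell}\;=\;\frac{Z_j^2}{4}.
\]
Because $Z_j\sim\ND(0,1)$ the right-hand side is distributed as $\tfrac14\chi^2_1$, and the almost-sure inequality yields $P(W_{f,\Sigma}>t)\ge P(\tfrac14\chi^2_1>t)$ for every $t$, that is, $W_{f,\Sigma}\ge_{st}\tfrac14\chi^2_1$.

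I do not expect a genuine obstacle here: the entire idea is to bound the denominator $\sum_i\lambda_i^2 Z_i^2$ from above by $\ell\sum_i\lambda_i Z_i^2$ — which is exactly where nonnegativity of the eigenvalues is essential, in contrast with the Cauchy--Schwarz step used for the upper bound in Proposition~\ref{prop:csi} — and then to observe that the surviving expression is a rescaled single chi-square. The only points worth verifying carefully are that the eigenvalues of $A\Sigma$ are real and are carried unchanged through the reduction of Lemma~\ref{lem:quadratic-forms-spectral} (they are, being the eigenvalues of the symmetric matrix $B^TAB$), and that the denominator is almost surely nonzero, which follows from $\lambda_j>0$ and $Z_j\neq 0$ a.s.
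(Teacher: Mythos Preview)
Your proof is correct and follows essentially the same approach as the paper: reduce to the canonical form via Lemma~\ref{lem:quadratic-forms-spectral}, normalize by the largest eigenvalue, and use the elementary bound $\lambda_i^2\le \ell\lambda_i$ (the paper phrases this as a single inequality $(\sum_i\lambda_iZ_i^2)^2\ge Z_1^2\sum_i\lambda_i^2Z_i^2$ after scaling so that $\lambda_1=1$ is maximal). Your version makes the role of nonnegativity a bit more explicit and records the intermediate bound $W_{f,\Sigma}\ge \tfrac{1}{4\ell}\sum_i\lambda_iZ_i^2$, but the argument is the same.
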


\begin{pf}
Let $\lambda_1,\dots,\lambda_k\ge0$ be the eigenvalues of
$A\Sigma$. By scaling, we can assume without loss of generality
that $\lambda_1=1$ and $0\le\lambda_i\leq1$ for $2\leq i\leq k$.
Then
\begin{eqnarray*}
\frac{ ( \lambda_1 Z_1^2 + \cdots+ \lambda_k Z_k^2  )^2}{
4 (\lambda_1^2 Z_1^2+\cdots+\lambda_k^2 Z_k^2 )} \ge \frac{Z_1^2 (\lambda_1^2 Z_1^2+\cdots+\lambda_k^2
Z_k^2 )}{
4 (\lambda_1^2 Z_1^2+\cdots+\lambda_k^2 Z_k^2 )} = \frac{1}{ 4}
Z_1^2,
\end{eqnarray*}
and the claim follows from Lemma~\ref{lem:quadratic-forms-spectral}.
\end{pf}

Proposition~\ref{lem:stochastic-lower-bound},
Theorem~\ref{thm:quadratic} and simulation experiments lead us to
conjecture that $\frac{1}{4}\chi^2_1$ is still a stochastic lower
bound when there are both positive and negative eigenvalues
$\lambda_i$.

\begin{conjecture}
For any quadratic form $f\neq0$ and any positive semidefinite
matrix $\Sigma\neq0$, the distribution of $W_{f,\Sigma}$
stochastically dominates $\frac{1}{4} \chi^2_1$.
\end{conjecture}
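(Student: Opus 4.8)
The plan is to put $(f,\Sigma)$ into the canonical form of Lemma~\ref{lem:quadratic-forms-spectral}, factor out a radial component, and then reduce to a purely angular inequality. By the rank-reduction lemma we may assume $\Sigma$ is positive definite, and then by Lemma~\ref{lem:quadratic-forms-spectral} together with the scaling invariance in Lemma~\ref{lem:invariance}(i) it suffices to treat
\[
4W \;\eqdist\; \frac{(\lambda_1 Z_1^2+\cdots+\lambda_k Z_k^2)^2}{\lambda_1^2 Z_1^2+\cdots+\lambda_k^2 Z_k^2},
\]
with $Z_1,\dots,Z_k$ independent standard normal; rescaling and, if necessary, replacing $f$ by $-f$, we may assume $|\lambda_i|\le 1$ for every $i$ and $\lambda_1=1$. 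If all $\lambda_i\ge 0$ the conclusion is Proposition~\ref{lem:stochastic-lower-bound}, so assume that at least one $\lambda_i$ is negative.

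Write $Z_i^2=R^2\beta_i$, where $R^2=\|Z\|^2\sim\chi^2_k$ and $\beta=(Z_i^2/\|Z\|^2)_i$ follows the Dirichlet$(\tfrac12,\dots,\tfrac12)$ distribution; $R^2$ and $\beta$ are independent, since the length and direction of a Gaussian vector are independent. Then $4W\eqdist R^2 g(\beta)$ with
\[
g(\beta)=\frac{\bigl(\sum_i\lambda_i\beta_i\bigr)^2}{\sum_i\lambda_i^2\beta_i}\in[0,1],
\]
while $\chi^2_1\eqdist Z_1^2=\tilde R^2\tilde\beta_1$ with $\tilde R^2\sim\chi^2_k$ independent of $\tilde\beta_1\sim\Beta(\tfrac12,\tfrac{k-1}2)$. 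Because multiplying by an independent nonnegative random variable preserves the stochastic order, it is enough to establish the angular inequality
\[
g(\beta)\;\ge_{st}\;\Beta(\tfrac12,\tfrac{k-1}2),
\]
that is, that the angle between $(\sqrt{\beta_i})_i$ and $(\lambda_i\sqrt{\beta_i})_i$ is stochastically no larger than the angle between $(\sqrt{\beta_i})_i$ and a coordinate axis. When all $\lambda_i\ge 0$ this holds pointwise: $\sum\lambda_i^2\beta_i\le\sum\lambda_i\beta_i$ and $\sum\lambda_i\beta_i\ge\lambda_1\beta_1=\beta_1$ give $g(\beta)\ge\sum\lambda_i\beta_i\ge\beta_1$, recovering Proposition~\ref{lem:stochastic-lower-bound}. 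When $f$ is a product of two linear forms, $W\sim\tfrac14\chi^2_1$ by Theorem~\ref{thm:glonek}, which forces $g(\beta)$ to be $\Beta(\tfrac12,\tfrac{k-1}2)$; and for $\lambda\in\{-1,1\}^k$ the inequality reads $(2B-1)^2\ge_{st}\Beta(\tfrac12,\tfrac{k-1}2)$ with $B\sim\Beta(k_1/2,k_2/2)$, in line with Lemma~\ref{thm:beta}, which is an elementary computation with the Beta distribution function for small $k_1,k_2$.

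The real obstacle is the cancellation in the numerator $\sum_i\lambda_i\beta_i$ once the $\lambda_i$ have mixed signs: no pointwise bound survives (indeed $g$ vanishes on a hypersurface when $f$ factors), and, because $g(\beta)$ does not behave like $\beta_1$ as $\beta_1\to 0$, perturbative arguments in the $\lambda_i$ are misleading; one must exploit the precise Dirichlet$(\tfrac12,\dots,\tfrac12)$ weighting. The routes I would try are: (a) induction on the number of negative $\lambda_i$, flipping one sign at a time and showing that the resulting replacement $\sum_i\lambda_i\beta_i\mapsto\sum_i\lambda_i\beta_i-2\lambda_j\beta_j$ (the denominator $\sum_i\lambda_i^2\beta_i$ being unchanged) does not decrease $g$ in the stochastic order; (b) a direct evaluation of $P\{g(\beta)>t\}$ as a Dirichlet integral over the region $\{(\sum_i\lambda_i\beta_i)^2>t\sum_i\lambda_i^2\beta_i\}$, which in the mixed-sign case splits into two pieces on either side of the hyperplane $\{\sum_i\lambda_i\beta_i=0\}$, perhaps using the residue/contour-integral device from the proof of Theorem~\ref{thm:monomial} after linearizing $\sqrt W$ by an auxiliary independent normal as in~(\ref{eq:cauchy_2}); and (c) reducing to only two distinct values of $\lambda_i^2$ by a majorization argument and then checking an explicit one- or two-parameter inequality. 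Carrying out any of (a)--(c) uniformly in $(\lambda_1,\dots,\lambda_k)$ --- i.e.\ controlling how the sign pattern of the eigenvalues interacts with their magnitudes --- is precisely where we get stuck, which is why the statement remains a conjecture.
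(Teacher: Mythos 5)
First, be clear about the status of the statement: it appears in the paper as a conjecture. The authors explicitly say they do not know how to prove it in general, and they establish only two special cases --- all eigenvalues nonnegative (Proposition~\ref{lem:stochastic-lower-bound}) and eigenvalues in $\{1,-1\}$ (Theorem~\ref{thm:lbd}). So there is no proof in the paper to compare yours against, and your proposal --- which you candidly label a strategy and end by admitting you are stuck --- is likewise not a proof. Your reductions up to the angular inequality are all correct and match the structure of the paper's partial results: passing to the canonical form of Lemma~\ref{lem:quadratic-forms-spectral}, splitting off the independent radius $R^2\sim\chi^2_k$, using that multiplication by an independent nonnegative factor preserves $\le_{st}$, and thereby reducing the claim to $g(\beta)\ge_{st}\Beta(\tfrac12,\tfrac{k-1}{2})$ for the Dirichlet$(\tfrac12,\dots,\tfrac12)$ vector $\beta$. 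The genuine gap is exactly the one you name: eigenvalues of mixed sign and unequal magnitude, where the numerator $\sum_i\lambda_i\beta_i$ can vanish and no pointwise comparison survives. None of your routes (a)--(c) is carried out, so the statement remains open after your write-up just as it does in the paper.

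Two corrections of detail. You dismiss the $\lambda\in\{-1,1\}^k$ case as ``an elementary computation with the Beta distribution function for small $k_1,k_2$,'' but the paper's Theorem~\ref{thm:lbd} proves $(2B-1)^2\ge_{st}B'$ for \emph{all} $k_1,k_2\ge 1$, with $B\sim\Beta(k_1/2,k_2/2)$ and $B'\sim\Beta(1/2,(k-1)/2)$, by showing that the likelihood ratio of the two densities is increasing after the substitution $x\mapsto x^2$; a distribution-function check for small $k_1,k_2$ does not give this. Since that monotone-likelihood-ratio argument is the only mixed-sign instance of the conjecture anyone has closed, your route (a) --- flipping one sign at a time --- would most plausibly be attempted at the level of densities of $g(\beta)$ rather than tail probabilities. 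Separately, your remark that Theorem~\ref{thm:glonek} ``forces'' $g(\beta)\sim\Beta(\tfrac12,\tfrac{k-1}{2})$ when $f$ factors is true but needs a word of justification (cancel the common independent factor $R^2$ via moments or Mellin transforms, which is legitimate because $g(\beta)$ and $\tilde\beta_1$ are bounded); in any case it is only a consistency check, not a step toward the general statement.
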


While we do not know how to prove this conjecture in general, we are
able to treat the special case where the eigenvalues $\lambda_i$ are
either 1 or $-1$.

\begin{theorem}
\label{thm:lbd}
Let $k_1,k_2>0$, and $k=k_1+k_2$. If $f(x_1,\ldots,x_{k}) =
x_1^2+\cdots+x_{k_1}^2-x_{k_1+1}^2-x_{k_1+k_2}^2$, then
$W_f\ge_{\mathrm{st}}\frac{1}{4}\chi^2_1$.
\end{theorem}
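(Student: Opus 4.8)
The plan is to combine Lemma~\ref{thm:beta} with a reduction to a stochastic‑ordering statement about Beta laws, which is then settled by a monotone density‑ratio (single–crossing) argument.

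\emph{Step 1: reduce to a Beta comparison.} By Lemma~\ref{lem:invariance}(i) we may assume $k_1\le k_2$. By Lemma~\ref{thm:beta}, $W_f$ has the law of $\tfrac14 R^2(2B-1)^2$ with $R^2\sim\chi^2_k$ and $B\sim\Beta(k_1/2,k_2/2)$ independent, so the claim is $R^2(2B-1)^2\ge_{st}\chi^2_1$. On the other hand, writing a $\ND_k(0,I)$ vector $Z$ in spherical form, $Z_1^2=\|Z\|^2\cdot\bigl(Z_1^2/\|Z\|^2\bigr)$ exhibits $\chi^2_1$ as the law of $R^2\beta$ with the \emph{same} $R^2\sim\chi^2_k$ independent of $\beta\sim\Beta(1/2,(k-1)/2)$. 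Multiplying by a common independent nonnegative factor preserves stochastic order: for $t>0$,
\[
P\bigl(R^2(2B-1)^2>t\bigr)=\int_0^\infty P\bigl((2B-1)^2>t/r\bigr)\,dF_{R^2}(r),
\]
and likewise with $\beta$ in place of $(2B-1)^2$. Hence it suffices to prove the distributional inequality $(2B-1)^2\ge_{st}\beta$, where $B\sim\Beta(k_1/2,k_2/2)$ and $\beta\sim\Beta(1/2,(k-1)/2)$.

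\emph{Step 2: a density‑ratio identity.} Both $(2B-1)^2$ and $\beta$ are supported on $[0,1]$. Put $\phi(u):=P(|2B-1|\le u)=F_B\!\bigl(\tfrac{1+u}2\bigr)-F_B\!\bigl(\tfrac{1-u}2\bigr)$ and $\psi(u):=P(\beta\le u^2)$; the target inequality is $\phi(u)\le\psi(u)$ for all $u\in[0,1]$, and we have $\phi(0)=\psi(0)=0$, $\phi(1)=\psi(1)=1$, $\int_0^1\phi'=\int_0^1\psi'=1$. Differentiating,
\[
\phi'(u)=\tfrac12 f_B\!\left(\tfrac{1+u}2\right)+\tfrac12 f_B\!\left(\tfrac{1-u}2\right),\qquad
\psi'(u)=\frac{2\,(1-u^2)^{(k-3)/2}}{B(1/2,(k-1)/2)},
\]
where $f_B$ is the $\Beta(k_1/2,k_2/2)$ density. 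Collecting the two terms of $\phi'$, factoring out $(1-u^2)^{k_1/2-1}$, and cancelling powers of $(1-u^2)$ gives
\[
\frac{\psi'(u)}{\phi'(u)}=C\cdot h(u),\qquad
h(u)=\frac{(1-u^2)^{(k_2-1)/2}}{(1-u)^{(k_2-k_1)/2}+(1+u)^{(k_2-k_1)/2}},
\]
with $C>0$ depending only on $k_1,k_2$, valid on $(0,1)$ where $\phi'>0$.

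\emph{Step 3: monotonicity and single crossing.} The crux is that $h$ is non‑increasing on $[0,1]$. Writing $p:=(k_2-1)/2\ge q:=(k_2-k_1)/2\ge0$ and $(1-u^2)^p=[(1-u)(1+u)]^q(1-u^2)^{p-q}$, one gets
\[
h(u)=(1-u^2)^{p-q}\cdot\frac{1}{(1+u)^{-q}+(1-u)^{-q}},
\]
a product of two nonnegative non‑increasing functions of $u\in[0,1]$ — for the second factor, the denominator has derivative $q\bigl[(1-u)^{-q-1}-(1+u)^{-q-1}\bigr]\ge0$. Therefore $\psi'/\phi'$ is non‑increasing, so $\psi'-\phi'$ has the sign of the decreasing function $Ch(u)-1$. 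Since $\int_0^1(\psi'-\phi')=0$, either $\psi'\equiv\phi'$ (the equality case, which occurs exactly when $k_1=k_2=1$) or $\psi'-\phi'\ge0$ on an initial interval $[0,u^*]$ and $\le0$ on $[u^*,1]$; in either case $\psi-\phi$ rises then falls between its equal endpoint values $0$, so $\psi\ge\phi$ on $[0,1]$. This is precisely $(2B-1)^2\ge_{st}\beta$, and by Step~1 the theorem follows.

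\emph{Main obstacle.} Organized this way the proof is short; the only step requiring genuine care is the monotonicity of the density ratio $h$ in Step~3, together with a little bookkeeping of the degenerate and boundary cases. I expect that to be the main point of difficulty; the reductions of Step~1 and the differentiations of Step~2 are routine.
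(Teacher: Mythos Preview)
Your proposal is correct and follows essentially the same approach as the paper: reduce via Lemma~\ref{thm:beta} and the representation $\chi^2_1\eqdist R^2\beta$ with $\beta\sim\Beta(1/2,(k-1)/2)$ to comparing $(2B-1)^2$ with $\beta$, then establish the monotone likelihood ratio. The paper works directly with the densities $g,h$ on $[0,1]$ and differentiates $\ell(x)=g(x^2)/h(x^2)$ case by case in $k_1$, whereas your substitution $u=\sqrt{x}$ and the factorization $h(u)=(1-u^2)^{p-q}/[(1+u)^{-q}+(1-u)^{-q}]$ give the monotonicity in one stroke; your explicit single-crossing argument also makes the passage from MLR to stochastic order self-contained. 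One small quibble: the reduction to $k_1\le k_2$ is by the sign symmetry $f\mapsto -f$ (Lemma~\ref{lem:invariance}(i) with $c=-1$), which you invoke correctly but might state more explicitly.
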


\begin{pf}
Without loss of generality, we assume $k_1\leq k_2$. If $k_1=0$ or
$k_1=k_2=1$, the claim follows
Proposition~\ref{lem:stochastic-lower-bound} and
Theorem~\ref{thm:quadratic}, respectively. We now consider the case
$k_1\geq1$ and $k_2\geq2$. By Lemma~\ref{thm:beta}, we know
%
\begin{equation}
\label{eq:lbd-piece1} W_f\eqdist\tfrac{1}{4}R^2(2B-1)^2,
\end{equation}
where $R^2$
and $B$ are independent, $R^2\sim\chi^2_k$, and
$B\sim\Beta(k_1/2,k_2/2)$. On the other hand, if
$B'\sim\Beta(1/2,(k-1)/2)$ and is independent of $R^2$, then
%
\begin{equation}
\label{eq:lbd-piece2} R^2B'\sim\chi^2_1.
\end{equation}
Let $g(x)$ and $h(x)$ be the density functions of $(2B-1)^2$ and
$B'$, respectively. The comparison of (\ref{eq:lbd-piece1})
and (\ref{eq:lbd-piece2}) shows that it suffices to prove that
$(2B-1)^2$ is stochastically larger than $B'$. We will show a
stronger result, namely, that the likelihood ratio $g(x)/h(x)$ is an
increasing function over $[0,1]$.

To simplify the argument, we rescale the density functions to
\begin{eqnarray*}
g(x)\sqrt{x}  &\propto& (1+\sqrt{x})^{k_1/2-1}(1-\sqrt{x})^{k_2/2-1}
\\
&&{} + (1-\sqrt{x})^{k_1/2-1}(1+\sqrt{x})^{k_2/2-1}
\end{eqnarray*}
and
\begin{eqnarray*}
h(x)\sqrt{x} & \propto&(1-x)^{(k-3)/2}
\\
& =& (1-\sqrt{x})^{(k_1+k_2-3)/2}(1+\sqrt{x})^{(k_1+k_2-3)/2}.
\end{eqnarray*}
For our purpose, it is equivalent to show the monotonicity of
$g(x^2)/h(x^2)$, which is proportional to
\begin{eqnarray*}
\ell(x) &:=& (1+x)^{(-k_2+1)/2}(1-x)^{(-k_1+1)/2}
\\
&&{} + (1-x)^{(-k_2+1)/2}(1+x)^{(-k_1+1)/2}.
\end{eqnarray*}
When $k_1=1$, the derivative of $\ell(x)$ satisfies
\[
2\ell'(x) = (k_2-1) (1-x)^{(-k_2-1)/2} -
(k_2-1) (1+x)^{(-k_2-1)/2}>0
\]
when $0<x<1$, and thus the likelihood ratio is an increasing
function. When $k_1\geq2$, we have
\begin{eqnarray*}
&&2\ell'(x)(1+x)^{(k_2+1)/2}(1-x)^{(k_2+1)/2}
\\
&&\quad= (1+x) \bigl[(k_2-1) (1+x)^{(k_2-k_1)/2} + (k_1-1)
(1-x)^{(k_2-k_1)/2} \bigr]
\\
&&\qquad {}- (1-x) \bigl[(k_1-1) (1+x)^{(k_2-k_1)/2} + (k_2-1)
(1-x)^{(k_2-k_1)/2} \bigr]
\\
&&\quad> (k_2-k_1) \bigl[(1+x)^{(k_2-k_1)/2} -
(1-x)^{(k_2-k_1)/2} \bigr]\geq0
\end{eqnarray*}
for all $0<x<1$. Therefore, $\ell(x)$ is an increasing function.
\end{pf}

\section{Tetrads}
\label{sec:tetrad}

We now turn to the problem that sparked our interest in Wald tests of
singular hypothesis, namely, the problem of testing tetrad constraints
on the covariance matrix $\Theta=(\theta_{ij})$ of a random vector $Y$
in $\mathbb{R}^p$ with $p\ge4$. A tetrad is a $2\times2$
subdeterminant that only involves off-diagonal entries and, without
loss of generality, we consider the tetrad
%
\begin{equation}
\label{eq:mytetrad} \gamma(\Theta)=\theta_{13}\theta_{24}-
\theta_{14}\theta_{23} = \det \pmatrix{ \theta_{13}
& \theta_{14}
\cr
\theta_{23} & \theta_{24} }.
\end{equation}

\begin{example}
Consider a factor analysis model in which the coordinates of $Y$ are
linear functions of a latent variable $X$ and noise terms. More
precisely, $Y_i=\beta_{0i}+\beta_i X +\epsilon_i$ where
$X\sim\ND(0,1)$ is independent of $\epsilon_1,\dots,\epsilon_p$,
which in turn are independent normal random variables. Then the
covariance between $Y_i$ and $Y_j$ is $\theta_{ij}=\beta_i\beta_j$
and the tetrad from (\ref{eq:mytetrad}) vanishes.
\end{example}

Suppose now that we observe a sample of independent and identically
distributed random vectors $Y^{(1)},\dots,Y^{(n)}$ with covariance matrix
$\Theta$. Let $\overline{Y}_n$ be the sample mean vector, and let
\[
\hat\Theta= \frac{1}{n}\sum_{i=1}^n
\bigl(Y^{(i)}-\overline Y_n\bigr) \bigl(Y^{(i)}-
\overline Y_n\bigr)^T
\]
be the empirical covariance matrix. Assuming that the data-generating
distribution has finite fourth moments,
it
holds that
\[
\sqrt{n}(\hat\Theta-\Theta) \convdist\ND_k\bigl(0,V(\Theta)\bigr)
\]
with $k=p^2$. The rows and columns of the asymptotic covariance
matrix $V(\Theta)$ are indexed by the pairs $ij:=(i,j)$, $1\le i,j\le
p$.
Since the tetrad from (\ref{eq:mytetrad}) only involves the
covariances indexed by the pairs in $C=\{13,14,23,24\}$, only the
principal submatrix
\[
\Sigma(\Theta) := V(\Theta)_{C\times C}
\]
is of relevance for the large-sample distribution of the sample tetrad
$\gamma(\hat\Theta)$.

The gradient of the tetrad is
\[
\nabla\gamma(\Theta) = (\theta_{24},-\theta_{23},-
\theta_{14},\theta_{13}).
\]
Hence, if at least one of the four covariances in the tetrad is
nonzero the Wald statistic $T_\gamma$ converges to a $\chi^2_1$
distribution. If, on the other hand,
$\theta_{13}=\theta_{14}=\theta_{23}=\theta_{24}=0$, then the
large-sample limit of $T_\gamma$ has the distribution of
$W_{f,\Sigma(\Theta)}$ where
\[
f(x)=x_1x_4-x_2x_3
\]
is a quadratic form in $k=4$ variables; recall
Proposition~\ref{prop:wald_normal}. This form can be written as
$x^TAx$ with a matrix that is a Kronecker product, namely
%
\begin{equation}
\label{eq:tetrad-A} A = \pmatrix{ 0 & 0 & 0&1
\cr
0 & 0 & -1&0
\cr
0 & -1&0 & 0
\cr
1&0
& 0&0 }= \pmatrix{ 0 & 1
\cr
-1 & 0 }\otimes \pmatrix{ 0 & 1
\cr
-1 & 0 }.
\end{equation}

If $Y$ is multivariate normal, then the asymptotic covariance
matrix has the entries
\[
V(\Theta)_{ij,kl} = \theta_{ik}\theta_{jl} +
\theta_{il}\theta_{jk}.
\]
In the singular case with
$\theta_{13}=\theta_{14}=\theta_{23}=\theta_{24}=0$, we have thus
\begin{eqnarray*}
\Sigma(\Theta) &=& \pmatrix{ \theta_{11}\theta_{33} &
\theta_{11}\theta_{34} & \theta_{12}
\theta_{33} & \theta_{12}\theta_{34}
\cr
\theta_{11}\theta_{34} & \theta_{11}
\theta_{44}& \theta_{12}\theta _{34} &
\theta_{12}\theta_{44}
\cr
\theta_{12}
\theta_{33} & \theta_{12}\theta_{34} &
\theta_{22}\theta_{33} & \theta_{22}
\theta_{34}
\cr
\theta_{12}\theta_{34} &
\theta_{12}\theta_{44}& \theta_{22}\theta
_{34} & \theta_{22}\theta_{44} }\\
&=& \pmatrix{
\theta_{11} & \theta_{12}
\cr
\theta_{12} &
\theta_{22} }\otimes \pmatrix{ \theta_{33} &
\theta_{34}
\cr
\theta_{34} & \theta_{44} },
\end{eqnarray*}
which again is a Kronecker product. We remark that $\Sigma(\Theta)$
would also be a Kronecker product if we had started with an elliptical
distribution instead of the normal, compare
Iwashita and Siotani \cite{iwashita1994}, equation (2.1), or if $(Y_1,Y_2)$ and $(Y_3,Y_4)$
were independent in the data-generating
distribution.

As we show next, in the singular case, the Kronecker structure of the
two matrices $A$ and $\Sigma(\Theta)$ gives a limiting
distribution of the Wald statistic for the tetrad that does not depend
on the block-diagonal covariance matrix $\Theta$.

\begin{theorem}
Let $\Sigma=\Sigma^{(1)}\otimes\Sigma^{(2)}$ be the Kronecker
product of two positive definite $2\times2$ matrices
$\Sigma^{(1)},\Sigma^{(2)}$. Let $f(x)=x_1x_4-x_2x_3$. Then
\[
W_{f,\Sigma} \eqdist\tfrac{1}{4}R^2U^2,
\]
where $R^2\sim\chi^2_4$ and $U\sim\Uniform[0,1]$ are
independent.
\end{theorem}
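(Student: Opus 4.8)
The plan is to reduce the distribution of $W_{f,\Sigma}$ to that of the canonical quadratic form $g(x)=x_1^2+x_2^2-x_3^2-x_4^2$, already analyzed in Lemma~\ref{thm:beta}, by exploiting the Kronecker structure of both the form matrix $A$ from~\eqref{eq:tetrad-A} and of $\Sigma$. Set $J=\begin{pmatrix}0&1\\-1&0\end{pmatrix}$, so that $A=J\otimes J$ and $x^{T}Ax=2f(x)$; by Lemma~\ref{lem:invariance}(i) we may work with the quadratic form $x^{T}Ax$ in place of $f$. Since $\Sigma=\Sigma^{(1)}\otimes\Sigma^{(2)}$ is positive definite, Lemma~\ref{lem:quadratic-forms-spectral} reduces the problem to determining the eigenvalues $\lambda_1,\dots,\lambda_4$ of $A\Sigma$ and substituting them into~\eqref{eq:canonical-quadratic}.

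The one computation that carries the argument is the spectrum of $A\Sigma$. By the mixed-product property of the Kronecker product,
\[
A\Sigma=(J\otimes J)\bigl(\Sigma^{(1)}\otimes\Sigma^{(2)}\bigr)=\bigl(J\Sigma^{(1)}\bigr)\otimes\bigl(J\Sigma^{(2)}\bigr).
\]
For each positive definite $2\times 2$ matrix $\Sigma^{(i)}$, the matrix $J\Sigma^{(i)}$ has trace $0$ and determinant $\det\Sigma^{(i)}>0$, hence characteristic polynomial $\lambda^{2}+\det\Sigma^{(i)}$ and eigenvalues $\pm i\sqrt{\det\Sigma^{(i)}}$. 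Therefore the eigenvalues of $A\Sigma$ are the four products $(\pm i\sqrt{\det\Sigma^{(1)}})(\pm i\sqrt{\det\Sigma^{(2)}})$, namely $+\sqrt{\det\Sigma^{(1)}\det\Sigma^{(2)}}$ with multiplicity two and $-\sqrt{\det\Sigma^{(1)}\det\Sigma^{(2)}}$ with multiplicity two. (That these are real is a consistency check, since $A\Sigma$ is similar to the symmetric matrix $\Sigma^{1/2}A\Sigma^{1/2}$.)

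Finally, $\lambda_1=\lambda_2=-\lambda_3=-\lambda_4$ up to a common positive factor, which scales out of~\eqref{eq:canonical-quadratic} by the remark following Lemma~\ref{lem:quadratic-forms-spectral} (equivalently Lemma~\ref{lem:invariance}(i)). Hence $W_{f,\Sigma}$ has the same distribution as $W_g$ for $g(x)=x_1^2+x_2^2-x_3^2-x_4^2$, which is the case $k=4$, $k_1=k_2=2$ of Lemma~\ref{thm:beta}; by~\eqref{eq:R2U2} this distribution is that of $\tfrac14 R^2U^2$ with $R^2\sim\chi^2_4$ and $U\sim\Uniform[0,1]$ independent, as claimed. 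There is no serious obstacle: once one notices that the Kronecker structure forces the eigenvalue pattern $\{+c,+c,-c,-c\}$, the result follows immediately from the lemmas already in place; the only point demanding a little care is the eigenvalue bookkeeping, in particular verifying that the positive and negative eigenvalues have equal magnitude so that the canonical form collapses exactly to the $\pm 1$ case.
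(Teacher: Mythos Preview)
Your proof is correct and follows essentially the same approach as the paper: use the Kronecker factorizations $A=J\otimes J$ and $\Sigma=\Sigma^{(1)}\otimes\Sigma^{(2)}$ to compute the eigenvalues of $A\Sigma$ as $\{+c,+c,-c,-c\}$, then invoke Lemma~\ref{lem:quadratic-forms-spectral} and the $k_1=k_2=2$ case of Lemma~\ref{thm:beta} (via~\eqref{eq:R2U2}). Your handling of the factor of two in $x^{T}Ax=2f(x)$ is in fact slightly more careful than the paper's own presentation.
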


\begin{pf}
Since $f$ is a quadratic form we may consider the canonical form
from Lemma~\ref{lem:quadratic-forms-spectral}, which depends on the
(real) eigenvalues of $A\Sigma$. The claim follows from
Lemma~\ref{thm:beta} and the comments in the paragraph following its
proof provided the four eigenvalues of $A\Sigma$ all have the same
absolute value, two of them are positive and two are negative.

Let $\Sigma^{(i)}=(\sigma^{(i)}_{kl})$. Then, by
(\ref{eq:tetrad-A}),
\[
A\Sigma= \pmatrix{ -\sigma^{(1)}_{12} & \sigma^{(1)}_{11}
\vspace*{2pt}\cr
-\sigma^{(1)}_{22} & \sigma^{(1)}_{12} }
\otimes \pmatrix{ -\sigma^{(2)}_{12} & \sigma^{(2)}_{11}
\vspace*{2pt}\cr
-\sigma^{(2)}_{22} & \sigma^{(2)}_{12}
}.
\]
For $i=1,2$, since $\Sigma^{(i)}$ is positive definite, the matrix
\[
\pmatrix{ -\sigma^{(i)}_{12} & \sigma^{(i)}_{11}\vspace*{2pt}
\cr
-\sigma^{(i)}_{22} & \sigma^{(i)}_{12} }
\]
has the imaginary eigenvalues
\[
\pm\lambda^{(i)} =\pm\sqrt{\bigl(\sigma^{(i)}_{12}
\bigr)^2 - \sigma^{(i)}_{11}\sigma^{(i)}_{22}}
.
\]
It follows that $A\Sigma$ has the real eigenvalues
\[
\lambda^{(1)}\lambda^{(2)}\quad\mbox{and}\quad {-}\lambda^{(1)}
\lambda^{(2)},
\]
each with multiplicity two. Hence, Lemma~\ref{thm:beta}
applies with $k_1=k_2=2$.
\end{pf}

The distribution function of $\frac{1}{4} R^2U^2$ is
\[
F_{\mathrm{sing}}(t)= 1-\mathrm{e}^{-2t}+\sqrt{2\uppi t} \bigl(1-\Phi (2
\sqrt{t} ) \bigr),\qquad t\ge0,
\]
where $\Phi(t)$ is the distribution function of $\mathcal{N}(0,1)$.
The density $f_{\mathrm{sing}}(t)$ of $\frac{1}{4} R^2U^2$ is strictly
decreasing on $(0,\infty)$ and $f_{\mathrm{sing}}(t)\to\infty$ as $t\to0$.
In light of Theorem~\ref{thm:quadratic}, it is interesting to note
that the distribution of $\frac{1}{4} R^2U^2$ is not the distribution
of a linear combination of four independent $\chi^2_1$ random
variables, because the $\chi^2_d$ distribution has a finite density at
zero when $d\geq2$. However, the distribution satisfies
\[
\tfrac{1}{4}\chi^2_1 \le_{\mathrm{st}}
\tfrac{1}{4} R^2U^2 \le_{\mathrm{st}}
\tfrac{1}{4}\chi^2_2.
\]
The first inequality holds according to Theorem~\ref{thm:lbd}. The
second inequality holds because $R^2U\sim\chi^2_2$. According to the
next result, the distribution is also no larger than a $\chi^2_1$
distribution, which means that the Wald test of a tetrad constraint is
asymptotically conservative at the tetrad's singularities (which are
given by block-diagonal covariance matrices).

\begin{proposition}
\label{prop:tetrad-bound}
Suppose $R^2\sim\chi^2_4$ and $U\sim\Uniform[0,1]$ are independent.
Then
\[
\tfrac{1}{4}R^2U^2 \le_{\mathrm{st}}
\chi^2_1.
\]
\end{proposition}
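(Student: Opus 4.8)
The plan is to prove the stochastic domination by comparing distribution functions directly. Since $\chi^2_1 = Z^2$ for $Z\sim\mathcal N(0,1)$, its distribution function is $t\mapsto 2\Phi(\sqrt t)-1$, so, using the stated formula for $F_{\rm sing}$, the inequality $P(\tfrac14 R^2U^2>t)\le P(\chi^2_1>t)$ for all $t\ge 0$ is, after the substitution $s=\sqrt t$, equivalent to
\[
h(s):=2\bar\Phi(s)+\sqrt{2\pi}\,s\,\bar\Phi(2s)-e^{-2s^2}\;\ge\;0\qquad\text{for all }s\ge 0,
\]
where $\bar\Phi=1-\Phi$. One checks at once that $h(0)=0$ and that each of the three terms tends to $0$ as $s\to\infty$, so $\lim_{s\to\infty}h(s)=0$. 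Hence it suffices to show that $h$ is first nondecreasing and then nonincreasing on $[0,\infty)$: for such a function $\inf_{s\ge0}h(s)=\min\{h(0),\lim_{s\to\infty}h(s)\}=0$.

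The core of the argument is therefore to show that $h'$ changes sign exactly once, from positive to negative. A direct computation gives $h'(s)=A(s)-B(s)$ with
\[
A(s)=\sqrt{2\pi}\,\bar\Phi(2s)+2se^{-2s^2}>0,\qquad B(s)=\sqrt{2/\pi}\,e^{-s^2/2}>0,
\]
so it is enough to show that $A/B$ exceeds $1$ on an initial interval and lies below $1$ thereafter. The useful structural fact is that the two contributions to $A'$ proportional to $e^{-2s^2}$ cancel, leaving $A'(s)=-8s^2e^{-2s^2}$, while $B'(s)/B(s)=-s$. Consequently
\[
\frac{d}{ds}\log\frac{A(s)}{B(s)}=s-\frac{8s^2e^{-2s^2}}{A(s)}=\frac{s\,\eta(s)}{A(s)},\qquad \eta(s):=\sqrt{2\pi}\,\bar\Phi(2s)-6se^{-2s^2},
\]
so $A/B$ is unimodal (up then down) as soon as $\eta$ changes sign exactly once on $(0,\infty)$.

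To control $\eta$, I would use $\eta(0)=\sqrt{\pi/2}>0$ together with $\eta'(s)=(24s^2-8)e^{-2s^2}$, which shows $\eta$ is strictly decreasing on $[0,1/\sqrt3]$ and strictly increasing on $[1/\sqrt3,\infty)$; then the elementary tail bound $\bar\Phi(x)\le\phi(x)/x$ (here $\phi$ is the standard normal density) gives $\eta(s)\le e^{-2s^2}\bigl(\tfrac1{2s}-6s\bigr)<0$ for all $s\ge 1/(2\sqrt3)$. Since $1/(2\sqrt3)<1/\sqrt3$, these facts force $\eta$ to be positive on some interval $(0,s_0)$ and negative on $(s_0,\infty)$. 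Hence $\log(A/B)$, and so $A/B$, is strictly increasing on $(0,s_0)$ and strictly decreasing on $(s_0,\infty)$; as $(A/B)(0)=\pi/2>1$ and $A/B\to 0$ as $s\to\infty$, the ratio crosses the level $1$ exactly once, at some $s^*>s_0$. Thus $h'>0$ on $[0,s^*)$ and $h'<0$ on $(s^*,\infty)$, so $h$ is increasing then decreasing, $h\ge 0$ follows, and therefore $\tfrac14 R^2U^2\le_{st}\chi^2_1$.

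I expect the unimodality of $h'$ to be the main obstacle: $h'$ is not itself monotone, and attempting to bound $h$ term-by-term by Gaussian tail inequalities breaks down near $s=0$, precisely where $h(0)=0$ and the usual tail bounds degenerate. The device that resolves this is to pass to the logarithmic derivative of the ratio $A/B$, which both exploits the exponential structure (the cancellation in $A'$) and reduces the whole question to the sign pattern of the auxiliary function $\eta$, which is in turn governed by the quadratic $24s^2-8$ and a single crude tail estimate.
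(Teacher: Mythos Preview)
Your argument is correct. The derivative computations, the cancellation giving $A'(s)=-8s^{2}e^{-2s^{2}}$, the reduction to the sign pattern of $\eta$, and the use of the Mills ratio bound all check out; together they yield the unimodality of $h$ and hence $h\ge 0$.

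The paper proves the same inequality by a different and shorter route. Instead of working with the explicit formula for $F_{\rm sing}$, it writes $\chi^2_1\eqdist R^2 B$ with $B\sim\Beta(\tfrac12,\tfrac32)$ independent of $R^2\sim\chi^2_4$, so the claim becomes $\tfrac12 U\le_{st}\sqrt{B}$. That comparison is then immediate: $F_{U/2}(t)=2t$ is linear, while $F_{\sqrt{B}}$ is strictly concave on $(0,1)$ with $F_{\sqrt{B}}'(0)=4/\pi<2$, hence $F_{\sqrt{B}}(t)\le (4/\pi)t<2t=F_{U/2}(t)$ on $(0,1/2]$. Your approach treats the two tails analytically and needs a somewhat delicate two-step reduction (from $h'$ to $A/B$ to $\eta$) precisely because $h$ vanishes at both endpoints; the paper's approach sidesteps this by factoring out the common $R^2$ and comparing the remaining $[0,1]$-valued factors, which turns the problem into a one-line concavity estimate.
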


\begin{pf}
Let $Z_1,\dots,Z_4$ be independent standard normal random variables.
Then the sum of squares
\[
Z_1^2+Z_2^2+Z_3^2+Z_4^2
\eqdist R^2 \sim\chi^2_4
\]
and the ratio
\[
\frac{Z_1^2}{Z_1^2+Z_2^2+Z_3^2+Z_4^2} \sim\Beta \biggl(\frac{1}{2},\frac{3}{2} \biggr)
\]
are independent.
Hence, the claim holds if and only if
\[
\tfrac{1}{2} U \le_{\mathrm{st}} \sqrt{B},
\]
where $U\sim\Uniform[0,1]$ and
$B\sim\Beta (\frac{1}{2},\frac{3}{2} )$. The distribution
of $U/2$ is supported on the interval $[0,1/2]$ on which it has
distribution function
\[
F_{U/2}(t)=2t.
\]
For $t\in(0,1)$, the distribution function of $\sqrt{B}$ has first
and second derivative
\[
F_{\sqrt{B}}'(t) = \frac{4 \sqrt{1 - t^2}}{\uppi}\quad\mbox{and}\quad
F_{\sqrt{B}}''(t) =-\frac{4t}{\uppi\sqrt{1 - t^2}}.
\]
Hence, $F_{\sqrt{B}}$ is strictly concave on $(0,1)$ and has a
tangent with slope $4/\uppi<2$ at $t=0$. Consequently,
\[
F_{U/2}(t)\ge F_{\sqrt{B}}(t),\qquad t\in\mathbb{R},
\]
giving the claimed ordering of $\frac{1}{4}R^2U^2$ and the
$\chi^2_1$ distribution.
\end{pf}


\section{Bivariate monomials}\label{sec:monomial}

In this section, we study the random variable $W_{f,\Sigma}$ when
$f(x)=x_1^{\alpha_1}x_2^{\alpha_2}$. If the exponents
$\alpha_1,\alpha_2$ are positive integers, then $f$ is a bivariate
monomial. However, all our arguments go through for a slightly more
general case in which $\alpha_1,\alpha_2$ are positive real numbers;
recall Remark~\ref{rem:real-exponents}.
Our main result is that the distribution of $W_{f,\Sigma}$ does not
depend on $\Sigma$.

\begin{theorem}
\label{thm:monomial}
Let $f(x)=x_1^{\alpha_1}x_2^{\alpha_2}$ with $\alpha_1,\alpha_2>0$,
and let $\Sigma$ be any positive semidefinite $2\times2$ matrix
with positive diagonal entries. Then
\[
W_{f,\Sigma} \sim\frac{1}{(\alpha_1+\alpha_2)^2} \chi^2_1.
\]
\end{theorem}

\begin{pf}
As shown in Section~\ref{sec:intro}, the claim is true if
$\Sigma=(\sigma_{ij})$ is diagonal. It thus suffices to show that
$W_{f,\Sigma}$ has the same distribution as $W_{f}:=W_{f,I}$.

By Lemma~\ref{lem:invariance}, we can assume without loss of
generality that $\sigma_{11}=\sigma_{22}=1$ and $\rho:=\sigma_{12}>
0$. Since
\[
\frac{1}{W_{f,\Sigma}} = \frac{\alpha_1^2}{X_1^2} + \frac{2\rho
\alpha
_1\alpha_2}{X_1X_2} +
\frac{\alpha_2^2}{X_2^2},
\]
we can also assume $\alpha_1=1$ for simplicity. With
$\sigma=1/\alpha_2$, we have
\[
\frac{1}{W_{f,\Sigma}} = \frac{1}{X_1^2} + \frac{2\rho}{\sigma
X_1X_2} + \frac{1}{\sigma^2 X_2^2}
\]
and need to show that
\[
W_{f,\Sigma} \sim\frac{1}{(1+\sigma^{-1})^{2}} \chi^2_1.
\]
If $\rho=1$, then $X_1$ and $X_2$ are almost surely equal and it is
clear that $W_{f,\Sigma}$ has the same distribution as $W_f$.
Hence, it remains to consider $0<\rho<1$.

Let $Z_1$ and $Z_2$ be independent standard normal random variables.
When expressing $Z_1=R\cos(\Psi)$ and $Z_2=R\sin(\Psi)$ in polar
coordinates, it holds that $R$ and $\Psi$ are independent, and
$\Psi$ is uniformly distributed over $[0,2\uppi]$. Let
$\rho=\sin(\phi)$ with $0 \le\phi< \uppi/2$, then the joint
distribution of $X_1$ and $X_2$ can be represented as
\[
X_1=R\cos(\Psi-\phi/2),\qquad X_2= R \sin(\Psi+\phi/2),
\]
which leads to
\[
\frac{1}{W_{f,\Sigma}} = \frac{1}{R^2} \cdot\frac{1}{T' }
\]
with
\[
\frac{1}{T'} =\frac{1}{\cos^2(\Psi-\phi/2)} + \frac{2\sin(\phi)}{\sigma\cos(\Psi-\phi/2) \sin(\Psi+\phi/2)} +
\frac{1}{\sigma^2 \sin^2(\Psi+\phi/2)}.
\]
Routine trigonometric calculations show that $T'$ can be expressed
as a function of the doubled angle $2\Psi$. More precisely,
\[
T' = \frac{\sigma^2}{4} t(2\Psi,\phi),
\]
where
\[
t(\psi,\phi) = \frac{
2-\cos(2\phi)+2\cos(\psi-\phi)-\cos(2\psi)-2\cos(\psi+\phi)
}{
1 - \sigma\cos(2 \phi) + (1 + \sigma) [\sigma+ \cos(\psi-\phi) -
\sigma\cos(\psi+\phi)]
}.
\]
Since $2\Psi$ is uniformly distributed on $[0,4\uppi]$, the
distribution of $T'$ is independent of $\phi$ if and only if the
same is true for the distribution of $T=t(\Psi,\phi)$.

We proceed by calculating the moments of $T$ and show that they
are independent of $\phi$. For each $0\le\phi<\uppi/2$, there exists
a small interval $L=[\phi-\epsilon,\phi+\epsilon]$ such that when
$m\geq1$, the function
\[
\sup_{\phi\in L} \bigl[t(\psi,\phi)\bigr]^{m-1}
\frac{\partial}{\partial
\phi} t(\psi,\phi)
\]
is integrable over $0\le\psi<2\uppi$. Therefore, we have
%
\begin{equation}
\label{eq:mom_deriv} \frac{\partial}{\partial\phi} \E\bigl(T^m\bigr) = \int
_{0}^{2\uppi} \frac{m}{2\uppi} \bigl[t(\psi,\phi)
\bigr]^{m-1}\frac{\partial}{\partial\phi} t(\psi,\phi)\,\mathrm{d}\psi.
\end{equation}
The expression of $\frac{\partial}{\partial\phi} t(\psi,\phi)$ is
long, so we omit it here.

We introduce the complex numbers $z=\mathrm{e}^{\mathrm{i}\psi}$ and $a=\mathrm{e}^{-\mathrm{i}\phi}$,
and express the functions $t(\psi,\phi)$ and
$\frac{\partial}{\partial\phi} t(\psi,\phi)$ in terms of $z$ and
$a$:
\begin{eqnarray*}
t(\psi,\phi) &=& u(z,a) =  \frac{(a-z)^2(1+az)^2}{z(a+a\sigma+a^2z-\sigma z)(-1+a^2\sigma
-az-a\sigma z)},
\\
\frac{\partial}{\partial\phi} t(\psi,\phi)&=& v(z,a) \\
&= & \frac{a(a-z)(1+az)(1+a^2s+2az-2a \sigma z+a^2z^2+\sigma
z^2)}{\mathrm{i}z(a+a\sigma+a^2z-\sigma z)^2(-1+a^2 \sigma-az-a \sigma
z)^2}
\\
&&{} \times\bigl(-a-a\sigma-z-a^2z+\sigma z+a^2 \sigma z
-az^2-a\sigma z^2\bigr).
\end{eqnarray*}
The integral in (\ref{eq:mom_deriv}) can be computed as a complex
contour integral on the unit circle $\mathbb{T}=\{z\dvt  \llvert z\rrvert =1\}$
\[
\int_{0}^{2\uppi} \bigl[t(\psi,\phi)
\bigr]^{m-1} \frac{\partial}{\partial
\phi} t(\psi,\phi)\,\mathrm{d}\psi =
\oint_{\mathbb{T}} \bigl[u(z,a)\bigr]^{m-1}v(z,a) \frac{1}{\mathrm{i}z}\,
\mathrm{d}z.
\]
Let
\[
q(z,a)= \bigl[u(z,a)\bigr]^{m-1}v(z,a)\frac{1}{\mathrm{i}z} .
\]
As a function of $z$, it has three poles of the same order $m+1$:
\[
z_0=0,\qquad z_1=\frac{a^2\sigma-1}{a+a\sigma},\qquad z_2 =
\frac{a+a\sigma
}{\sigma-a^2}.
\]
Since $a=\mathrm{e}^{-\mathrm{i}\phi}$ with $0\leq\phi<\uppi/2$ and $\sigma>0$, we have
\[
\llvert z_1\rrvert =\biggl\llvert \frac{a^2\sigma-a\bar a}{a+a\sigma}\biggr\rrvert =
\biggl\llvert \frac{a\sigma-\bar a}{1+\sigma}\biggr\rrvert <1,
\]
and similarly $\llvert z_2\rrvert >1$. Therefore, $q(z,a)$ has two poles $z_0$ and
$z_1$ within the unit disc. By the Residue theorem, we know
%
\begin{equation}
\label{eq:int_res} \frac{1}{2\uppi \mathrm{i}}\oint_{\mathbb{T}} q(z,a)\,\mathrm{d}z =
\Res(q;0) + \Res(q;z_1),
\end{equation}
where $\Res(q;0)$ and $\Res(q;z_1)$ are the residues at 0 and $z_1$
respectively. Let $\zeta_0=\{c\mathrm{e}^{\mathrm{i}\psi}, 0\leq\psi\leq2\uppi\}$ be
a small circle around 0 such that $z_1$ is outside the circle. Let
$S$ be the M\"obius transform
\[
S(w)=\frac{z_1-w}{1-\bar{z}_1w}.
\]
Then $S$ is one-to-one from the unit disk onto itself and maps $0$
to $z_1$, and $\zeta_0$ to a closed curve
$\zeta_1=\{S(c\mathrm{e}^{\mathrm{i}\psi}), 0\leq\psi\leq2\uppi\}$ around $z_1$ with
winding number one. It holds that
\[
\Res(q;z_1) = \frac{1}{2\uppi \mathrm{i}}\oint_{\zeta_1} q(z,a)\,
\mathrm{d}z = \frac
{1}{2\uppi
\mathrm{i}}\oint_{\zeta_0} q\bigl(S(w),a
\bigr)S' (w)\,\mathrm{d}w.
\]
It also holds that
\[
q\bigl(S(w),a\bigr)S' (w) = -q(w,a).
\]
The preceding identity is crucial for the proof, as it implies that
\[
\frac{1}{2\uppi \mathrm{i}}\oint_{\zeta_0} q\bigl(S(w),a\bigr)S' (w)\,
\mathrm{d}w = -\frac{1}{2\uppi
\mathrm{i}}\oint_{\zeta_0} q(w,a)\,\mathrm{d}w = -
\Res(q;0).
\]
Hence, the integral in (\ref{eq:int_res}) is zero.

We have shown that the integral in (\ref{eq:mom_deriv}) is zero for
every $m\geq1$, which means that the moments of $T$ do not depend
on $\phi$ for $0\leq\phi<\uppi/2$. When $\phi=0$, the random
variable $T$ is bounded, so its moments uniquely determine the
distribution. Therefore, the distribution of $T$ does not depend on
$\phi$, and the proof is complete.
\end{pf}

\begin{remark}
If $\alpha_1=\alpha_2$, then Theorem~\ref{thm:monomial} reduces to
Theorem~\ref{thm:glonek}. In this case, our proof above would only
need to treat $\sigma=1$. Glonek's proof of
Theorem~\ref{thm:glonek} finds the distribution function of a random
variable related to our $T$. If $\sigma=1$, this requires solving a
quadratic equation. When $\sigma\neq1$, we were unable to extend
this approach as a complicated quartic equation arises in the
computation of the distribution function. We thus turned to the
presented method of moments.
\end{remark}

Let $X=(X_1,X_2)^T $ and $Y=(Y_1,Y_2)^T $ be two independent
$\mathcal{N}_2(0,\Sigma)$ random vectors, where $\Sigma$ has positive
diagonal entries. Let $p_1,p_2$ be nonnegative numbers such that
$p_1+p_2=1$. The random variable
\[
Q=\frac{p_1X_2Y_1+p_2X_1Y_2}{\sqrt{(p_1X_2,p_2X_1)\Sigma
(p_1X_2,p_2X_1)^T }}
\]
has the standard normal distribution, and is independent of $X$. To
see this, observe that the conditional distribution of $Q$ given $X$
is always standard normal.
For $f(x)=x_1^{p_1}x_2^{p_2}$, let
\[
V_{f,\Sigma} = \frac{f(X)}{\sqrt{(\nabla f(X))^T \Sigma\nabla f(X)}}
\]
and $W_{f,\Sigma}=V_{f,\Sigma}^2$. Then
%
\begin{equation}
\label{eq:cauchy_2} p_1\frac{Y_1}{X_1} + p_2
\frac{Y_2}{X_2} = \frac{Q}{V_{f,\Sigma}}.
\end{equation}
By taking the conditional expectation given $V_{f,\Sigma}$, the
characteristic function of (\ref{eq:cauchy_2}) is seen to be
\[
\E \bigl[\exp\{\mathrm{i}tQ/V_{f,\Sigma}\} \bigr] = \E \bigl[ \exp\bigl\{-
\tfrac{1}{2}t^2/W_{f,\Sigma}\bigr\} \bigr].
\]
The uniqueness of the moment generating function for positive random
variables (Billingsley \cite{billingsley1995}, Theorem~22.2) yields that
(\ref{eq:cauchy_2}) has a standard Cauchy distribution (with
characteristic function $\mathrm{e}^{-\llvert t\rrvert }$) if and only if
$W_{f,\Sigma}\sim\chi^2_1$. Therefore, we have the following
equivalent version of Theorem~\ref{thm:monomial}.

\begin{corollary}
\label{thm:cauchy_2}
Let $X=(X_1,X_2)^T $ and $Y=(Y_1,Y_2)^T $ be independent
$\mathcal{N}_2(0,\Sigma)$ random vectors, where $\Sigma$ has
positive diagonal entries. If $p_1,p_2$ are nonnegative numbers such
that $p_1+p_2=1$, then the random variable
\[
p_1\frac{Y_1}{X_1} + p_2\frac{Y_2}{X_2}
\]
has the standard Cauchy distribution.
\end{corollary}

\section{Conjectures}
\label{sec:conjecture}

In Section~\ref{sec:monomial}, we mentioned that
Theorem~\ref{thm:monomial} and Corollary~\ref{thm:cauchy_2} are
equivalent. Similarly, Conjecture \ref{conj:monomial} is equivalent
to the following one.

\begin{conjecture}
Let $X=(X_1,X_2,\ldots,X_k)^T $ and $Y=(Y_1,Y_2,\ldots,Y_k)^T $ be
independent and have the same distribution
$\mathcal{N}_k(0,\Sigma)$, where $\Sigma$ has positive diagonal
entries. If $p_1,p_2,\ldots,p_k$ are nonnegative numbers such that
$p_1+p_2+\cdots+p_k=1$, then
\[
\label{eq:cauchy_k} \frac{p_1Y_1}{X_1}+\frac{p_2Y_2}{X_2}+\cdots+\frac{p_kY_k}{X_k}
\]
has the standard Cauchy distribution.
\end{conjecture}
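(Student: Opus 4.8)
The plan is to reduce the statement first to Conjecture~\ref{conj:monomial} and then to the invariance of a certain angular distribution under changes of the off-diagonal entries of $\Sigma$, following the strategy that proved the bivariate case in Theorem~\ref{thm:monomial}.

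First I would record the reduction already used in the paragraph preceding this conjecture, now for general $k$. Taking $f(x)=x_1^{p_1}\cdots x_k^{p_k}$ one has $\nabla f=f\,\nabla\log f$, hence $1/W_{f,\Sigma}=(\nabla\log f(X))^T\Sigma\,\nabla\log f(X)=\sum_{j,l}p_jp_l\sigma_{jl}/(X_jX_l)$, and conditioning on $X$ shows, exactly as in the derivation of Corollary~\ref{thm:cauchy_2}, that $\sum_j p_jY_j/X_j$ is standard Cauchy if and only if $W_{f,\Sigma}\sim\chi^2_1$. By Lemma~\ref{lem:invariance} we may assume $\sigma_{jj}=1$ for all $j$, and since $\sum_j p_j=1$ the diagonal case $\Sigma=I$ already gives $W_{f,\Sigma}\sim\chi^2_1$ by the convolution rule~\eqref{eq:stable_conv}. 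So it suffices to prove that the law of the quadratic form $q_\Sigma(X):=\sum_{j,l}p_jp_l\sigma_{jl}/(X_jX_l)$ is, for every admissible $\Sigma$ (positive semidefinite, unit diagonal), the same as for $\Sigma=I$.

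Next I would pass to spherical coordinates $X=R\,\omega$, with $R^2\sim\chi^2_k$ independent of the uniformly distributed direction $\omega$ on the unit sphere. Then $q_\Sigma(X)=R^{-2}\widetilde q_\Sigma(\omega)$ with $\widetilde q_\Sigma(\omega)=\sum_{j,l}p_jp_l\sigma_{jl}/(\omega_j\omega_l)$, so, $R$ being independent of $\omega$ with a fixed law, it is enough to show that the law of $1/\widetilde q_\Sigma(\omega)$ does not depend on the off-diagonal entries of $\Sigma$. Note that for $\Sigma=I$ the variable $1/\widetilde q_I(\omega)=(\sum_j p_j^2/\omega_j^2)^{-1}$ is bounded, hence determined by its moments. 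Fixing all but one off-diagonal entry $\sigma_{jl}$ and writing it as $\sin\phi$, I would mimic the proof of Theorem~\ref{thm:monomial}: choose a convenient parametrization of the two angles carrying the coordinates $\omega_j,\omega_l$ so that $1/\widetilde q_\Sigma$ becomes, after passing to a doubled angle $\psi$, a rational function of $z=e^{i\psi}$ and $a=e^{-i\phi}$; verify a domination bound licensing differentiation of $\E[(1/\widetilde q_\Sigma)^m]$ under the integral in $\phi$; rewrite $\tfrac{\partial}{\partial\phi}\E[(1/\widetilde q_\Sigma)^m]$ as a contour integral over the unit circle; and show that its residues inside the disk cancel through a M\"obius automorphism of the disk, as in the bivariate argument. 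Granting this for every off-diagonal entry and every $m\ge1$, the moments of $1/\widetilde q_\Sigma$ are constant over the (convex, hence connected) set of admissible $\Sigma$, so they agree with those at $\Sigma=I$, and moment-determinacy at $\Sigma=I$ then forces $1/\widetilde q_\Sigma\eqdist1/\widetilde q_I$ for all $\Sigma$, completing the proof.

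The main obstacle is the final residue-cancellation step. For $k=2$ the integrand had exactly two poles inside the unit disk, interchanged by a single disk automorphism, which made the cancellation transparent; for $k\ge3$ there are additional ``spectator'' correlations and extra angular variables, the rational integrand acquires more poles, and there is no evident disk automorphism pairing them up, so already computing $\tfrac{\partial}{\partial\phi}\E[(1/\widetilde q_\Sigma)^m]$ and exhibiting its vanishing is not routine. A natural inductive alternative --- partitioning the indices into blocks $A$ and $B$ and removing the cross-correlations between $A$ and $B$ one at a time, using Corollary~\ref{thm:cauchy_2} (or its $k$-variate form) as the base case --- founders on the same point, because the partial sums $\sum_{j\in A}p_jY_j/X_j$ and $\sum_{l\in B}p_lY_l/X_l$ are dependent whenever that cross-block is nonzero, so the bivariate identity does not apply directly. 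Finding either the right multivariate residue identity or the right conditional decomposition is, as the authors remark, the heart of the open problem.
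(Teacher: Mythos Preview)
The statement is a \emph{conjecture} in the paper; there is no proof for $k\ge 3$ with nondiagonal $\Sigma$, and your proposal is, appropriately, a strategy sketch that ends by naming the open step. Your reduction to Conjecture~\ref{conj:monomial} via the characteristic function of $Q/V_{f,\Sigma}$ is exactly the argument the paper gives before Corollary~\ref{thm:cauchy_2}, extended verbatim to general $k$; that part is correct and matches the paper.

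There is, however, a genuine slip in your spherical-coordinate step. You write $X=R\omega$ with $R^2\sim\chi^2_k$ independent of a uniform direction $\omega$, but this is true only when $\Sigma=I$; for general $\Sigma$ neither is $|X|^2$ chi-square nor is $X/|X|$ uniform, nor are they independent. The correct move (and what the paper does for $k=2$) is to take $Z\sim\ND_k(0,I)$, set $X=BZ$ with $BB^T=\Sigma$, and write $Z=R\omega$. Then indeed $q_\Sigma(X)=R^{-2}\cdot\widetilde q_\Sigma(\omega)$, but with
\[
\widetilde q_\Sigma(\omega)=\sum_{j,l}\frac{p_jp_l\sigma_{jl}}{(B\omega)_j(B\omega)_l},
\]
not $\sum_{j,l}p_jp_l\sigma_{jl}/(\omega_j\omega_l)$. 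In the bivariate proof this is precisely the parametrization $X_1=R\cos(\Psi-\phi/2)$, $X_2=R\sin(\Psi+\phi/2)$. The consequence is that varying a single off-diagonal entry $\sigma_{jl}$ also perturbs $B$ and hence every denominator $(B\omega)_m$, so your ``freeze all coordinates except the $(j,l)$ pair'' picture does not describe an actual one-parameter family of Gaussian laws, and the hoped-for reduction to a two-pole contour integral does not materialize in the way you suggest.

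This error does not change your bottom line: even with the correct angular factor, no multivariate analogue of the M\"obius pairing of residues is known, and the paper leaves exactly this step as the open problem.
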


For a proof of this conjecture, it is natural to try an induction type
argument, which might involve the ratio of normal random variables
with nonzero means (Marsaglia \cite{marsaglia1965}). However, we were unable to
make this work.


By taking the reciprocal of $W_{f,\Sigma}$, we can translate
Conjecture \ref{conj:monomial} into another
equivalent form.

\begin{conjecture}
\label{conj:reciprocal}
Let $X=(X_1,X_2,\ldots,X_k)^T \sim\mathcal{N}_k(0,\Sigma)$, where
$\Sigma$ has positive diagonal entries. If $p_1,p_2,\ldots,p_n$ are
nonnegative numbers such that $p_1+p_2+\cdots+p_n=1$, then
%
\begin{equation}
\label{eq:reciprocal} \biggl(\frac{p_1}{X_1},\frac{p_2}{X_2},\ldots,
\frac{p_n}{X_n} \biggr) \Sigma \biggl(\frac{p_1}{X_1},\frac{p_2}{X_2},
\ldots,\frac{p_n}{X_n} \biggr)^T 
\sim\frac{1}{\chi_1^2}.
\end{equation}
\end{conjecture}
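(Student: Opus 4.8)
The plan is to attack Conjecture~\ref{conj:reciprocal} through its Cauchy reformulation and an induction on the dimension $k$, using the moment/contour-integral machinery of Theorem~\ref{thm:monomial} as a fallback; I will say at the outset that I expect the recursion to stall, and I will pinpoint where.

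First I would record the reduction that makes all three forms of the conjecture interchangeable. For the monomial $f(x)=x_1^{p_1}\cdots x_k^{p_k}$ one has $\nabla f(X)_i=p_i f(X)/X_i$, so $\nabla f(X)=f(X)\cdot(p_1/X_1,\dots,p_k/X_k)^T$ and hence the left-hand side of~(\ref{eq:reciprocal}) is exactly $1/W_{f,\Sigma}$; up to the known scaling $W_{\prod x_i^{c\alpha_i},\Sigma}=c^{-2}W_{\prod x_i^{\alpha_i},\Sigma}$ this is Conjecture~\ref{conj:monomial} with $\sum\alpha_i=1$. Next, using the Gaussian-randomization identity $\exp(-\tfrac12 t^2/W_{f,\Sigma})=\E[\exp(itG/\sqrt{W_{f,\Sigma}})\mid W_{f,\Sigma}]$ with $G$ an independent standard normal, together with the rotational fact that $G\cdot\|v\|\eqdist\langle v,Z\rangle$ for $Z\sim\mathcal{N}_k(0,I)$, one gets $G/\sqrt{W_{f,\Sigma}}\eqdist\sum_i p_i Y_i/X_i$ with $Y\sim\mathcal{N}_k(0,\Sigma)$ independent of $X$. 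By the uniqueness of the Laplace transform, Conjecture~\ref{conj:reciprocal} is then equivalent to: $\sum_{i=1}^k p_i Y_i/X_i$ is standard Cauchy. By Lemma~\ref{lem:invariance} I may rescale to $\sigma_{ii}=1$ for all $i$, and the genuinely hard instances have full-rank $\Sigma$ with all correlations nonzero. The already-settled cases — $\Sigma$ diagonal (the stable-$\tfrac12$ convolution argument), $\Sigma$ of rank one, and $k=2$ (Theorem~\ref{thm:monomial} and Corollary~\ref{thm:cauchy_2}) — serve as the base of the induction.

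For the inductive step from $k-1$ to $k$ I would condition on $(X_k,Y_k)$. Writing the Schur complement $\Sigma_{11\cdot k}=\Sigma_{11}-\Sigma_{1k}\Sigma_{kk}^{-1}\Sigma_{k1}$, the block $(X_1,\dots,X_{k-1})$ given $X_k$ is $\mathcal{N}_{k-1}(X_k\,b,\Sigma_{11\cdot k})$ for a fixed vector $b$, and likewise $(Y_1,\dots,Y_{k-1})$ given $Y_k$. The partial sum $\sum_{i=1}^{k-1}p_i Y_i/X_i$ thus becomes, conditionally, a weighted sum of ratios of correlated normals with nonzero means, for which the distributional formulas of \cite{marsaglia:1965} are available; one would then hope to integrate out $(X_k,Y_k)$ and collapse the mixture to a Cauchy using the $k$-term weights $p_1,\dots,p_{k-1},p_k$ (which still sum to $1$). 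The step I expect to be the main obstacle is precisely this one: the induction hypothesis is stated for centered Gaussians, while the conditional means $X_k\,b$ and $Y_k\,b$ do not vanish, so the recursion is not \emph{self-similar}. Absorbing the means by shifting the $X_i$ changes the denominators of the ratios $Y_i/X_i$ and destroys the monomial structure, and I do not see a conditioning that restores it — this is, I believe, exactly the reason the conjecture is still open.

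Failing the induction, I would try to generalize the proof of Theorem~\ref{thm:monomial}: parametrize $\Sigma$ (unit diagonal) by $\binom{k}{2}$ correlation angles, pass to polar-type coordinates on $\mathbb{R}^k$ so that $1/W_{f,\Sigma}$ splits into a radial $\chi^2_k$-type factor times a bounded angular variable, and show that every moment of that angular variable has vanishing partial derivative with respect to each angle by writing the derivative as a contour integral over the torus $\mathbb{T}^{k-1}=\{|z_1|=\dots=|z_{k-1}|=1\}$ and killing it with the residue theorem. In the case $k=2$ this succeeded because the integrand had only two poles in the unit disc, swapped by a single M\"obius involution whose Jacobian supplied the sign needed for cancellation. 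For $k\ge 3$ the interior poles of the iterated residue form a combinatorially much richer set, and I have no candidate symmetry pairing them; producing such a symmetry (or some other structural identity that makes the torus-integral cancellation transparent in every dimension) is what I regard as the key missing ingredient, alongside the alternative of a new conditioning identity that keeps the Cauchy formulation self-similar under dimension reduction.
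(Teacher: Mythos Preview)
The statement is a \emph{conjecture} in the paper, not a theorem; the paper offers no proof and explicitly says so. Your proposal is likewise not a proof, and you are upfront about this, so there is no discrepancy to flag on that count. The equivalence reductions you give in your first paragraph are correct and coincide with what the paper records in Section~\ref{sec:conjecture} and before Corollary~\ref{thm:cauchy_2}: $1/W_{f,\Sigma}$ is exactly the quadratic form in~(\ref{eq:reciprocal}), and the Cauchy form follows by conditioning $\sum_i p_i Y_i/X_i$ on $X$ to get a centered normal with variance $1/W_{f,\Sigma}$, then invoking uniqueness of the Laplace transform. (Your phrasing via ``$G\cdot\|v\|\eqdist\langle v,Z\rangle$ for $Z\sim\mathcal{N}_k(0,I)$'' is slightly off, since one needs $Y\sim\mathcal{N}_k(0,\Sigma)$ rather than the identity covariance; but the conclusion and the argument behind it are right.)

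Your inductive attack---condition on $(X_k,Y_k)$, apply the induction hypothesis to the $(k-1)$-dimensional block, and try to absorb the resulting nonzero conditional means via \cite{marsaglia:1965}---is exactly the route the authors allude to in Section~\ref{sec:conjecture}: ``For a proof of this conjecture it is natural to try an induction type argument, which might involve the ratio of normal random variables with nonzero means \citep{marsaglia:1965}. However, we were unable to make this work.'' You have correctly diagnosed the obstruction: the conditional Gaussians acquire means $X_k b$ and $Y_k b$, so the hypothesis for centered vectors no longer applies and the recursion is not self-similar. Your second line of attack, lifting the residue argument of Theorem~\ref{thm:monomial} to a torus $\mathbb{T}^{k-1}$ and searching for a higher-dimensional M\"obius-type symmetry that pairs the interior poles, is a natural generalization that the paper does not discuss; the difficulty you identify---that for $k\ge 3$ the pole structure is no longer governed by a single involution---is genuine, and nothing in the paper suggests a way around it. In short, your analysis is sound and goes somewhat beyond the paper's own discussion, but the conjecture remains open and your proposal does not close it.
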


Simulation provides strong evidence for the validity of these
conjectures. We have tried many randomly generated scenarios with
$2\le k\le5$, simulating large numbers of values for the rational
functions in question. In all cases, empirical distribution
functions were indistinguishable from the conjectured $\chi^2_1$ or
Cauchy distribution functions.


On the other hand, the positivity requirement for $p_1,p_2,\ldots,p_k$
is crucial for the validity of the conjectures. For
instance, let $Q$ be the reciprocal of the quantity on the left-hand
side of \eqref{eq:reciprocal}, and consider the special case where
$k=2$, $\Var(X_1)=\Var(X_2)=1$, $\Cor(X_1,X_2)=\rho$, and
$p_1=-p_2=1/2$. Assuming that $\llvert \rho\rrvert <1$, change coordinates to
\[
Z_1 = (X_1+X_2)/\sqrt{2(1+\rho)},\qquad
Z_2 = (X_1-X_2)/\sqrt{2(1-\rho)},
\]
and then to polar coordinates $Z_1=R\cos\Psi$ and
$Z_2=R\sin\Psi$. We obtain that
\[
Q=4 \biggl(\frac{1}{X_1^2}-\frac{2\rho}{X_1X_2}+\frac
{1}{X_2^2}
\biggr)^{-1} =R^2\frac{[\rho+\cos(2\Psi)]^2}{1-\rho^2}.
\]
The distribution of $Q$ now depends on $\rho$. For instance,
\[
\E[Q] = \frac{1 + 2 \rho^2}{1 - \rho^2}.
\]
%



\section{Conclusion}
\label{sec:conclusion}

In regular settings, the Wald statistic for testing a constraint on
the parameters of a statistical model converges to a $\chi^2_1$
distribution as the sample size increases. When the true parameter is
a singularity of the constraint, the limiting distribution is instead
determined by a rational function of jointly normal random variables
(recall Section~\ref{sec:wald-tests}). The distributions of these
rational functions are in surprising ways related to chi-square
distributions as we showed in our main results in
Sections~\ref{sec:quad}--\ref{sec:monomial}.

Our work led to several, in our opinion, intriguing conjectures about
the limiting distributions of Wald statistics. Although the
conjectures can be stated in elementary terms, we are not aware of any
other work that suggests these properties for the multivariate normal
distribution.

For quadratic forms, the usual canonical form leads to a particular
class of distributions parametrized by a collection of eigenvalues
(recall Lemma~\ref{lem:quadratic-forms-spectral}). It would be
interesting to study Schur convexity properties of this class of
distributions, which would provide further insights into asymptotic
conservativeness of Wald tests of singular hypotheses.

Finally, this paper has focused on testing a single constraint. A
natural follow-up problem is to study Wald tests of hypotheses defined
by several constraints. In this setting the choice of the constraints
representing a null hypothesis has an important effect on the
distribution theory, as exemplified by Gaffke, Steyer and von Davier \cite{gaffke1999} and
Gaffke, Heiligers and   Offinger \cite{gaffke2002}. As also mentioned in the \hyperref[sec:intro]{Introduction}, the new
work of {Dufour}, {Renault} and   {Zinde-Walsh} \cite{dufour2013} addresses some of the issues arising with
multiple constraints.

\section*{Acknowledgements}
We would like to thank G\'erard Letac and Lek-Heng Lim for helpful
comments on our conjectures. This work was supported by NSF under
Grant   DMS-0746265. Mathias Drton was also supported by an Alfred
P. Sloan Fellowship.



\printhistory
\end{document}